\def\@linkcolor{blue}
  \def\@anchorcolor{red}
  \def\@citecolor{red}
  \def\@filecolor{red}
  \def\@urlcolor{red}
  \def\@menucolor{red}
  \def\@pagecolor{red}
  \edef\x{%
    \edef\noexpand\x{%
      \endgroup
      \noexpand\toks@{%
        \catcode 96=\noexpand\the\catcode`\noexpand\`\relax
        \catcode 61=\noexpand\the\catcode`\noexpand\=\relax
      }%
    }%
    \noexpand\x
  }%
\newtheorem{Theorem}{Theorem}
\newtheorem{Lemma}{Lemma}
\newtheorem{Problem}{Problem}
\newtheorem{Remark}{Remark}
\newtheorem{Assumption}{Assumption}
\newtheorem{Definition}{Definition}
\DeclareMathOperator{\R}{\mathbb R}
\DeclareMathOperator*{\argmin}{arg\,min}
\renewcommand{\P}{\mathbb P}
\renewcommand{\S}{\mathcal S}
\newcommand{\X}{\mathcal X}
\newcommand{\T}{\mathcal T}
\renewcommand{\L}{\mathcal L}
\renewcommand{\d}{\mathrm d}
\newcommand{\erf}{\mathrm{erf}}
\newcommand{\dt}{\text{d}t}
\newcommand{\bb}{\boldsymbol}
\newcommand{\bx}{\boldsymbol{x}}
\newcommand{\bxh}{\hat{\boldsymbol{x}}}
\def\BibTeX{{\rm B\kern-.05em{\sc i\kern-.025em b}\kern-.08em
    T\kern-.1667em\lower.7ex\hbox{E}\kern-.125emX}}
\begin{document}

\title{\LARGE \bf Risk-Aware Fixed-Time Stabilization of Stochastic Systems under Measurement Uncertainty}

\author{Mitchell Black$^1$ \and Georgios Fainekos$^1$ \and Bardh Hoxha$^1$ \and Dimitra Panagou$^2$
\thanks{$^1$Toyota North America Research \& Development, 1555 Woodridge Ave, Ann Arbor, MI 48105, USA; \texttt{\{first.last\}@toyota.com}.}
\thanks{$^2$Dept. of Robotics and Dept. of Aerospace Engineering,  Univ. of Michigan, Ann Arbor, MI 48109, USA; \texttt{\{dpanagou\}@umich.edu}.}
}
\maketitle  


\begin{abstract}\label{sec: abstract}
This paper addresses the problem of risk-aware fixed-time stabilization of a class of uncertain, output-feedback nonlinear systems modeled via stochastic differential equations. First, novel classes of certificate functions, namely risk-aware fixed-time- and risk-aware path-integral-control Lyapunov functions, are introduced. Then, it is shown how the use of either for control design certifies that a system is both stable in probability and probabilistically fixed-time convergent (for a given probability) to a goal set. That is, the system trajectories probabilistically reach the set within a finite time, independent of the initial condition, despite the additional presence of measurement noise. These methods represent an improvement over the state-of-the-art in stochastic fixed-time stabilization, which presently offers bounds on the settling-time function in expectation only. The theoretical results are verified by an empirical study on an illustrative, stochastic, nonlinear system and the proposed controllers are evaluated against an existing method. Finally, the methods are demonstrated via a simulated fixed-wing aerial robot on a reach-avoid scenario to highlight their ability to certify the probability that a system safely reaches its goal.
\end{abstract}


\section{Introduction}\label{sec.intro}

Certifying the stability and/or convergence properties of a complex dynamical system is an important step in verification and control design, and often requires an accurate system model. While much attention has been paid to developing tools for system identification, even high-fidelity models may fail to capture the exogenous perturbations and random phenomena characteristic in many real-world systems. Modeling a system as a stochastic differential equation (SDE) provides a way to methodologically account for uncertain behavior, and is a common choice for systems from aerial robots and ground rovers to financial markets and climate models. As such, it is necessary to design controllers that confer the requisite stability certificates to stochastic systems.

In this regime, however, stability certificates are not absolute but rather probabilistic in nature. Traditionally, control Lyapunov functions (CLFs) have served as a mechanism for certifying asymptotic stability of an equilibrium point \cite{sontag1983lyapunov} (or asymptotic convergence to a goal set) for deterministic systems. Analogous theories for stochastic and switched stochastic systems (e.g., \cite{florchinger1997feedback}, \cite{dimarogonas2004lyapunov}) have also been developed, but they, like their deterministic cousins, only provide guarantees in the limit as time tends to infinity. Many systems operate over a finite time interval, or are subject to temporal specifications and spatiotemporal constraints, and therefore may require the stronger notions of finite- (FTS) or fixed-time stability (FxTS) introduced by \cite{bhat2000finite} and \cite{polyakov2011nonlinear} respectively. Whereas FTS of an equilibrium implies Lyapunov stability and convergence within a bounded finite time, a FxTS equilibrium (or goal set) is reached within a finite time independent of the initial condition. Since the development of stochastic CLFs, advancements have been made in theory for stochastic FTS both with state- \cite{chen2010finite,yin2011finite} and output-feedback \cite{Zha2014Finite}, but only in recent years has stochastic FxTS been addressed \cite{Yu2019Fixed} (and only for state-feedback or output-feedback for systems in strict-feedback form \cite{li2023prescribed}).

Yet despite these advances, neither the existing theory for FTS nor FxTS in stochastic control characterizes the level of associated risk, i.e., the probability of failing to reach the equilibrium or goal set, beyond specifying the outcome in expectation. Works related to this open problem include those studying probabilistic reachable sets. For example, \cite{Hewing2018Stochastic} defines probabilistic $n$-step reachable sets and \cite{abate2008probabilistic} computes maximal initial sets to bound the probability that the system trajectories reach some final set, though these ideas differ from finite- and fixed-time attractivity.
In verification and control design for many real applications, certifying that a spatiotemporal constraint shall be met with a probability of $0.5$ may be unacceptable; a much lower risk may need to be met. 

In control design, the notion of risk-awareness is not new. Recent works have proposed risk-aware controllers in the context of probabilistically avoiding constraint violations, e.g., \cite{Yaghoubi2021RiskKalman,singletary2022safe}, as well as our recent work aimed at reducing conservatism \cite{black2023safety}. Another class of methods are aimed at satisfying risk metrics, e.g., the increasingly popular conditional value-at-risk, over the distribution of desirable or undesirable outcomes \cite{hakobyan2019risk,ahmadi2021risk}, while others synthesize such risk measures into objective functions for optimization-based control laws \cite{yin2023risk,wei2023moving}. 

For the class of stochastic, nonlinear systems under consideration in this paper neither has any work addressed the matter of arbitrarily probabilistic FxTS of an equilibrium point, nor has the problem of output-feedback stabilization in fixed-time been solved. As such, this paper makes the following contributions: 
\begin{enumerate}[1)]
    \item it introduces novel classes of risk-aware fixed-time CLFs (RA-FxT-CLFs) and risk-aware path-integral CLFs (RA-PI-CLFs) for the fixed-time stabilization of a generic class of stochastic, nonlinear systems to a goal set under the additional effect of stochastic measurement noise; 
    \item it proves how the use of either RA-FxT-CLFs or RA-PI-CLFs for control design certifies that their associated goal set is probabilistically FxTS with probability $p_g$, i.e., that the system trajectories reach the goal set within the given fixed-time with probability $p_g$; and 
    \item it highlights the efficacy of RA-FxT-CLF- and RA-PI-CLF-based control laws in a comparative study against a control law derived from the stochastic FxT-CLFs introduced in \cite{Yu2019Fixed} on a demonstrative nonlinear system.
\end{enumerate}

\section{Preliminaries and Problem Formulation}\label{sec.prelims}
The uniform distribution supported by $a$ and $b$ is $\mathbb{U}[a,b]$. 
The sets of real numbers, real $n$-valued vectors, and real $n\times n$-valued matrices are $\R$, $\R^n$, and $\R^{n\times n}$ respectively. The trace of a matrix $\bb{M} \in \R^{n \times n}$ is $\textrm{Tr}(\bb{M})$.
$\mathcal{C}^n$ denotes the set of $n$-times continuously differentiable functions.
A bolded $\bx_t$ denotes a vector stochastic process at time $t$.
The Gauss error function is $\textrm{erf}(z) = \frac{2}{\sqrt{\pi}}\int_0^ze^{-t^2}\d t$, and $\textrm{erf}^{-1}(\cdot)$ is its inverse. 
The local Lipschitz constant of a function $\phi: \R^n \mapsto \R^m$ on a domain $\mathcal{D}$ is denoted $\lambda_\phi(\mathcal{D})$, i.e., $\|\phi(\bx) - \phi(\bx')\| \leq \lambda_\phi(\mathcal{D})\|\bx - \bx'\|$ for all $\bx,\bx' \in \mathcal{D}$. For a bounded function $\phi$, its Euclidean norm bound is denoted $b_\phi(\mathcal{D}) \triangleq \sup_{\bx \in \mathcal{D}}\|\phi(\bx)\|$. For the above, the domain $\mathcal{D}$ may be omitted when context is clear. For a closed set $\S$, $\|\bx\|_{\S}$ denotes the Euclidean distance of $\bx$ from $\S$. The set of extended class-$\mathcal{K}$ functions is denoted $\mathcal{K}_\infty$.

\subsection{Mathematical Preliminaries}
This paper is concerned with system operation over a finite time interval $\mathcal{T} = \{t \in \R_{\geq 0}: t \leq T\}$, where $T < \infty$. Consider $1$- and $p$-valued standard Wiener processes (i.e., Brownian motions) $w: \R_{\geq 0} \mapsto \R$ and $\bb{w}: \R_{\geq 0} \mapsto \R^p$ respectively defined over the complete probability space $(\Omega, \mathcal{F}, \P)$ for sample space $\Omega$, $\sigma_x$-algebra $\mathcal{F}$ over $\Omega$, and probability measure $\P: \mathcal{F} \mapsto [0,1]$. The following are required in the proofs of this paper's main results.
\begin{Lemma}[Level Crossing]\label{lem.wiener_crossing_probability}
    Given $a>0$, the probability that $w_t < a$, $\forall t \in \T$, is given by
    \begin{equation*}
        \P\left\{\sup_{t \in \T}w_t < a \right\} = \mathrm{erf}\left(\frac{a}{\sqrt{2T}}\right).
    \end{equation*}
\end{Lemma}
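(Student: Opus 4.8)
The plan is to reduce the statement to the classical reflection principle for Brownian motion. First I would recall that for a standard one-dimensional Wiener process $w$, the running maximum $M_T \triangleq \sup_{t \in [0,T]} w_t$ satisfies, by the reflection principle, the identity $\P\{M_T \geq a\} = 2\,\P\{w_T \geq a\}$ for any $a > 0$. The key combinatorial fact underlying this is that each path that exceeds level $a$ before time $T$ can be reflected about the first hitting time of $a$ to produce a path ending above $a$, and this reflection is measure-preserving; since $\P\{w_T = a\} = 0$, one gets the clean factor of two. Equivalently, $M_T$ has the same distribution as $|w_T|$, the absolute value of a centered Gaussian with variance $T$.

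Next I would translate the probability in the statement. Since $\P\{\sup_{t\in\T} w_t < a\} = 1 - \P\{M_T \geq a\}$, it remains to compute $\P\{M_T \geq a\} = 2\,\P\{w_T \geq a\}$. Here $w_T \sim \mathcal{N}(0,T)$, so $\P\{w_T \geq a\} = \frac{1}{\sqrt{2\pi T}}\int_a^\infty e^{-u^2/(2T)}\,\d u$. The substitution $u = \sqrt{2T}\,s$ turns this into $\frac{1}{\sqrt{\pi}}\int_{a/\sqrt{2T}}^\infty e^{-s^2}\,\d s = \frac{1}{2}\bigl(1 - \mathrm{erf}(a/\sqrt{2T})\bigr)$, using the definition of $\mathrm{erf}$ given in the preliminaries together with the fact that $\frac{2}{\sqrt\pi}\int_0^\infty e^{-s^2}\,\d s = 1$. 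Therefore $\P\{M_T \geq a\} = 1 - \mathrm{erf}(a/\sqrt{2T})$, and hence $\P\{\sup_{t\in\T} w_t < a\} = \mathrm{erf}(a/\sqrt{2T})$, as claimed.

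The only genuine subtlety — and the step I would be most careful about — is the strict versus non-strict inequality. The event $\{\sup_{t\in\T} w_t < a\}$ is the complement of $\{\sup_{t\in\T} w_t \geq a\}$, and I must check that $\P\{\sup_{t\in\T} w_t \geq a\} = \P\{\sup_{t\in\T} w_t > a\}$, i.e., that the supremum equals $a$ exactly with probability zero. This follows because on the event that the running max equals $a$, the first hitting time $\tau_a$ of level $a$ is at most $T$ and $w_{\tau_a} = a$ by path continuity; but conditional on $\tau_a < T$ (which holds a.s. on $\{M_T \geq a\}$ up to the null set $\{M_T = a, \tau_a = T\}$, itself null since $w_T$ is a continuous random variable), the strong Markov property gives that $w$ continues as a fresh Brownian motion from $a$ and a.s. exceeds $a$ immediately. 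Hence the strict and non-strict events differ by a null set, and the reflection-principle computation applies verbatim. Everything else is the routine Gaussian integral manipulation sketched above, so no further grinding is needed.
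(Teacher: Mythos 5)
Your proof is correct, and it takes a more self-contained route than the paper does. The paper's appendix writes the CDF of the running supremum as $\int_T^\infty q_a(\theta \mid x_0)\,\d\theta$, where $q_a$ is the first passage-time density with respect to level $a$, and then simply quotes (via the cited level-crossing reference) the closed-form expression $\sqrt{2/\pi}\int_0^{a/\sqrt T} e^{-s^2/2}\,\d s = \mathrm{erf}\bigl(a/\sqrt{2T}\bigr)$ for the standard Wiener process; the derivation of $q_a$ itself is deferred to the reference. You instead derive everything from the reflection principle: $\P\{\sup_{t\le T} w_t \ge a\} = 2\,\P\{w_T \ge a\}$, followed by the routine Gaussian substitution, plus an explicit check that the boundary event $\{\sup_t w_t = a\}$ is null so that strict and non-strict inequalities coincide. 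The two arguments rest on the same underlying fact (the reflection principle is exactly what produces the first passage-time density used in the paper), but yours buys self-containment and makes the measure-zero boundary issue explicit, while the paper's buys brevity by outsourcing the hitting-time analysis. One small remark: your null-set argument via the strong Markov property is more elaborate than needed — since you have already established $\sup_{t\le T} w_t \overset{d}{=} |w_T|$, which is atomless, $\P\{\sup_{t\le T} w_t = a\} = 0$ follows immediately.
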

\begin{proof}
    Given in Appendix \ref{app.wiener_crossing_probability}.
\end{proof}
\begin{Lemma}[Multivariate It$\hat{\mathrm{o}}$ Isometry]\label{lem.multidim_ito_isometry}
    Let $\bb{v}: \T \mapsto \R^{q}$ be a stochastic process adapted to the filtration of $\bb{w}$. Then,
    \begin{equation*}
        \mathbb{E}\left[\left(\int_0^t \bb{v}_s^\top\mathrm{d}\bb{w}_s\right)^2\right] = \mathbb{E}\left[\int_0^t\|\bb{v}_s\|^2\mathrm{d}s\right].
    \end{equation*}
\end{Lemma}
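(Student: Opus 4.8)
The plan is to reduce the multivariate identity to the classical scalar It\^o isometry by expanding the stochastic integral component-wise. Write $\bb{w} = (w^{(1)}, \dots, w^{(p)})^\top$ and $\bb{v} = (v^{(1)}, \dots, v^{(p)})^\top$ (the dimensions must agree, $q = p$, for the integrand $\bb{v}_s^\top\mathrm{d}\bb{w}_s$ to be defined), so that $\int_0^t \bb{v}_s^\top \mathrm{d}\bb{w}_s = \sum_{i=1}^p \int_0^t v^{(i)}_s \, \mathrm{d}w^{(i)}_s$. Squaring and using linearity of expectation splits $\mathbb{E}[(\int_0^t \bb{v}_s^\top\mathrm{d}\bb{w}_s)^2]$ into a sum of diagonal terms $\mathbb{E}[(\int_0^t v^{(i)}_s\, \mathrm{d}w^{(i)}_s)^2]$ and off-diagonal cross terms $\mathbb{E}[\int_0^t v^{(i)}_s\, \mathrm{d}w^{(i)}_s \int_0^t v^{(j)}_s\, \mathrm{d}w^{(j)}_s]$ for $i \ne j$.

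For the diagonal terms, the standard one-dimensional It\^o isometry applies, since $\bb{v}$ is adapted to the filtration of $\bb{w}$ and (implicitly) square-integrable on $\T$, giving $\mathbb{E}[(\int_0^t v^{(i)}_s\, \mathrm{d}w^{(i)}_s)^2] = \mathbb{E}[\int_0^t (v^{(i)}_s)^2\, \mathrm{d}s]$. Summing over $i$ and using $\sum_{i=1}^p (v^{(i)}_s)^2 = \|\bb{v}_s\|^2$ produces exactly the right-hand side, provided the cross terms vanish.

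The main work — and the only genuine obstacle — is to show those cross terms are zero. I would do this by the polarized form of the isometry: for $i \ne j$ the scalar Brownian motions $w^{(i)}$ and $w^{(j)}$ are independent, hence their quadratic covariation satisfies $\mathrm{d}\langle w^{(i)}, w^{(j)}\rangle_s = 0$. Applying It\^o's product rule to the martingales $M^{(i)}_t = \int_0^t v^{(i)}_s\, \mathrm{d}w^{(i)}_s$ and $M^{(j)}_t = \int_0^t v^{(j)}_s\, \mathrm{d}w^{(j)}_s$ yields $M^{(i)}_t M^{(j)}_t = \int_0^t M^{(i)}_s v^{(j)}_s\, \mathrm{d}w^{(j)}_s + \int_0^t M^{(j)}_s v^{(i)}_s\, \mathrm{d}w^{(i)}_s + \int_0^t v^{(i)}_s v^{(j)}_s\, \mathrm{d}\langle w^{(i)}, w^{(j)}\rangle_s$; the last integral is identically zero and the first two are mean-zero It\^o integrals, so $\mathbb{E}[M^{(i)}_t M^{(j)}_t] = 0$. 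Equivalently, one can invoke the general isometry $\mathbb{E}[\int v^{(i)}\,\mathrm{d}w^{(i)} \int v^{(j)}\,\mathrm{d}w^{(j)}] = \mathbb{E}[\int_0^t v^{(i)}_s v^{(j)}_s\, \mathrm{d}\langle w^{(i)}, w^{(j)}\rangle_s] = \delta_{ij}\,\mathbb{E}[\int_0^t v^{(i)}_s v^{(j)}_s\, \mathrm{d}s]$, which is the coordinate-free statement of the same fact. For full rigor one would first verify the identity for simple (piecewise-constant) adapted integrands, where the cross terms vanish directly from the independence and mean-zero property of disjoint Brownian increments, and then pass to the general case by the $L^2$ density of simple processes in the space of admissible integrands, being careful to track the square-integrability hypotheses that justify both the product rule and the vanishing of the It\^o-integral expectations.
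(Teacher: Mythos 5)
Your proof is correct and follows the same skeleton as the paper's: expand $\int_0^t \bb{v}_s^\top\mathrm{d}\bb{w}_s$ component-wise, apply the scalar It$\hat{\mathrm{o}}$ isometry to the diagonal terms, and show the off-diagonal cross terms vanish. The difference lies in how the cross terms are handled. The paper argues that, because $w^{(i)}$ and $w^{(j)}$ are independent for $i\neq j$, the expectation of the product of the two stochastic integrals factors into the product of their (zero) expectations; that factorization implicitly requires $\int_0^t v^{(i)}_s\,\mathrm{d}w^{(i)}_s$ and $\int_0^t v^{(j)}_s\,\mathrm{d}w^{(j)}_s$ to be independent, which does not follow from independence of the Brownian components alone when $\bb{v}$ is adapted to the filtration of the full vector process $\bb{w}$ (each $v^{(i)}$ may depend on all components of $\bb{w}$). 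Your route --- computing the cross expectation through the quadratic covariation via the It$\hat{\mathrm{o}}$ product rule, so that $\mathbb{E}[M^{(i)}_t M^{(j)}_t]=\mathbb{E}\left[\int_0^t v^{(i)}_s v^{(j)}_s\,\mathrm{d}\langle w^{(i)},w^{(j)}\rangle_s\right]=0$ since $\langle w^{(i)},w^{(j)}\rangle\equiv 0$ --- avoids this issue and is the more robust justification, equivalent to the polarized isometry you cite; the paper's argument is shorter but reaches the correct conclusion for a reason its stated justification does not quite capture. Your closing remarks about verifying the identity on simple adapted integrands and tracking square-integrability (a hypothesis the lemma statement leaves implicit) are the standard way to make either version fully rigorous, and go beyond the level of detail the paper provides.
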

\begin{proof}
    Given in Appendix \ref{app.proof_multidim_ito_isometry}.
\end{proof}

In the remainder, consider systems whose dynamics may be described by the following class of nonlinear, stochastic differential equations (SDE),
\begin{subequations}\label{eq.stochastic_system}
\begin{align}
    \d\bx_t &= f(\bx_t,\bb{u}_t)\d t + \sigma_x(\bx_t)\d\bb{w}_t, \label{eq.stochastic_dynamics} \\
    \d\bb{y}_t &= h(\bx_t)\d t + \sigma_y(\bx_t)\d\bb{v}_t, \label{eq.measurement_model}
\end{align}
\end{subequations}
where $\bx \in \mathcal{X} \subset \R^n$ denotes the state, $\bb{u} \in \mathcal{U} \subseteq \R^m$ the control input, $\bb{y} \in \R^p$ the measurable output, where $\bb{w} \in \R^n$, $\bb{v} \in \R^p$ are independent, standard $n$- and $p$-valued Wiener processes adapted to $\mathcal{F}$, and where $f: \mathcal{X} \times \mathcal{U} \mapsto \R^n$, $\sigma_x: \X \mapsto \R^{n \times n}$, $h: \X \mapsto \R^p$, and $\sigma_y: \X \mapsto \R^{p \times p}$ are known and continuous in their arguments. Let $\varphi_t(\bb{x}_0,\bb{u})$ denote the solution to \eqref{eq.stochastic_dynamics} at time $t$ under the effect of the input $\bb{u}$ beginning from the initial condition $\bb{x}_0$.
Adapted from \cite[Def. 7.3.1]{Oksendal2003Stochastic} to include control, the generator of a stochastic process is analogous to the Lie derivative for deterministic systems in that it characterizes the derivative of a function $\phi$ over the trajectories of \eqref{eq.stochastic_dynamics} in expectation.
\begin{Definition}\label{def.generator}
    The (infinitesimal) generator of $\bx_t$ is
    \begin{equation*}
        \L^{\bx}\phi(\bx, \bb{u}) \triangleq \lim_{t \rightarrow 0^+}\frac{\mathbb{E}\left[\phi(\bb{\varphi}_t(\bx, \bb{u})) \; | \; \bx_0 = \bx\right] - \phi(\bx)}{t},
    \end{equation*}
    where $\phi: \R^n \mapsto \R$ belongs to  $\mathcal{D}_\L$, the set of all functions such that the limit exists for all $\bx \in \R^n$.
\end{Definition}
By \cite[Thm. 7.3.3]{Oksendal2003Stochastic}, for a twice continuously differentiable function $\phi$ with compact support, i.e., $\phi \in \mathcal{C}_0^2(\X) \subset \mathcal{D}_\L$, 
\begin{equation*}
    \L^{\bx}\phi(\bx, \bb{u}) = \frac{\partial \phi}{\partial \bx}f(\bx, \bb{u}) + \frac{1}{2}\textrm{Tr}\left(\sigma_x(\bx)^\top\frac{\partial^2 \phi}{\partial \bx^2}\sigma_x(\bx)\right),
\end{equation*}
the Lebesgue integral over time of which, for sample path $\omega \in \Omega$, is denoted
\begin{equation}\label{eq.integrated_generator}
        I_\phi(t, \omega) \triangleq \int_0^t\L^{\bx}\phi(\bx_s(\omega),\bb{u}_s(\omega))\mathrm{d}s.
\end{equation}
For the system given by \eqref{eq.stochastic_system}, a state observer and suitable, estimate-feedback controller may be defined by
\begin{subequations}\label{eq.observer_controller_system}
\begin{align}
    \d \bxh_t &= f(\bxh_t, \bb{u}_t)\d t + \bb{K}_t\big(\d\bb{y}_t - h(\bxh_t)\d t\big), \label{eq.state_observer}\\
    \bb{u}_t &= k(t, \bxh_t), \label{eq.controller}
\end{align}
\end{subequations}
where $\bb{K}$ is the observer gain and $k: \mathcal{T} \times \R^n$ is piecewise-continuous in $t$ and locally Lipschitz in the observer state $\bxh \in \R^n$. Note that \eqref{eq.state_observer} describes a broad class of stochastic state observers, including families of Kalman-Bucy filters, e.g., EKBF \cite{reif1999stochastic}, UKBF \cite{Xu2008UKF}, etc. For many such observers, it may be shown that the error is bounded in probability under certain conditions (i.e., detectability of $\left(\frac{\partial f}{\partial \bx}(\bx,\bb{u}), h(\bx, \bb{u})\right)$, see \cite{reif1999stochastic, Yaghoubi2021RiskKalman}), which motivates the following assumption.
\begin{Assumption}\label{ass.bounded_observer_error}
    There exists $\epsilon_0 > 0$ such that for all $\|\bx_0 - \bxh_0\| \leq \epsilon_0$, there exist $\delta \in (0,1)$, $\epsilon(\delta) > 0$ such that
    \begin{equation*}
        \P\left\{\sup_{t \geq 0}\|\bx_t - \bxh_t\| \leq \epsilon(\delta)\right\} \geq 1 - \delta.
    \end{equation*}
\end{Assumption}
As noted in \cite{Jahanshahi2020Partial}, stochastic simulation functions introduced by \cite{julius2008probabilistic} may be used to determine the relationship between $\epsilon$ and $\delta$. It is therefore assumed in the remainder that $\delta$, $\epsilon$, and $\epsilon_0$ are known, and that $\bxh_0$ satisfies $\|\bx_0 - \bxh_0\| \leq \epsilon_0$.

\subsection{Problem Formulation}
This paper considers the finite-time stabilization (in probability) of the trajectories of \eqref{eq.stochastic_dynamics} to a neighborhood of the origin (assumed to be an equilibrium), i.e., to a goal set $\S_g$ that contains the origin and is defined by
\begin{align}
    \S_g &= \{\bx \in \mathcal{X} \mid V(\bx) \leq 0\}, \label{eq.goal_set} 
\end{align}
for a function $V \in \mathcal{C}^2: \mathcal{X} \mapsto \R$ satisfying
\begin{align}
    \alpha_1(\|\bx\|_{\S_g}) \leq V(\bx) \leq \alpha_2(\|\bx\|_{\S_g}), \label{eq.V_pos_decrescent} \\
    0 < V(\bx_0) \leq \gamma_V, \; \forall \bx_0 \in \mathcal{X}_0 \subset \X, \label{eq.V0_r}
\end{align}
for $\alpha_1,\alpha_2 \in \mathcal{K}_\infty$ and known $\gamma_V < \infty$. The following introduce relaxed notions of finite- and fixed-time stable (in probability) sets as compared to \cite{yin2011finite} and \cite{Yu2019Fixed} in that finite-time attractivity is permitted to hold for a given probability.
\begin{Definition}\label{def.stochastically_FTS}
    Given $p_g \in (0,1)$, the set $\S_g$ is \textbf{locally finite-time stable with probability p$_{\bb{g}}$} ($p_g$-FTS) for the system \eqref{eq.stochastic_system} under control policy $\pi_{\bb{u}}(\cdot) \in \mathcal{U}$ if, $\forall \bx_0 \in \mathcal{D} \subseteq \X$, $\exists T_s(\bx_0) < \infty$ such that
    \begin{enumerate}
        \item for every pair $(p, \chi(p))$, $p \in (0,1)$, $\chi = \chi(p) > 0$, there exists $\psi = \psi(\chi,p) > 0$ such that 
        \small{
        \begin{equation*}
            \|\bx_0\|_{\S_g} \leq \psi \implies \P\left\{\sup_{t \geq 0}\|\bb{\varphi}_t(\bx_0,\pi_{\bb{u}})\|_{\S_g} \leq \chi\right\} \geq 1 - p,
        \end{equation*}}\normalsize
        \item $\P\left\{\lim_{t \rightarrow T_s(\bx_0)}\|\bb{\varphi}_t(\bx_0, \pi_{\bb{u}})\|_{\S_g} = 0\right\} \geq p_g$.
    \end{enumerate}
\end{Definition}
The first element of the above requires that $\S_g$ is stable in probability for \eqref{eq.stochastic_dynamics}, and the second requires that $\S_g$ is locally finite-time attractive for \eqref{eq.stochastic_dynamics} with probability $p_g$ for bounded settling time function $T_s: \mathcal D \mapsto (0,\infty)$. 

\begin{Definition}
    Given $p_g \in (0,1)$, the set $\S_g$ is \textbf{fixed-time stable with probability p$_{\bb{g}}$} ($p_g$-FxTS) for the system \eqref{eq.stochastic_system} under control policy $\pi_{\bb{u}}(\cdot) \in \mathcal{U}$ if $\exists T_{max} \in (0, \infty)$ such that
    \begin{enumerate}
        \item $\S_g$ is globally finite-time stable with probability $p_g$, and
        \item $\mathbb{P}\left\{T(\bx_0) \leq T_{max} \right\} \geq p_g$, $\forall \bx_0 \in \X$. 
    \end{enumerate}
\end{Definition}
In contrast to the existing notion of stochastic fixed-time stability (see \cite{Yu2019Fixed}), the above definition specifies a condition on the probability of the settling time function rather than its expectation. This may be more appropriate in cases where the settling time function is non-Gaussian, or where controllers must trade-off risk, i.e., the probability of violating some hard system constraint (like safety), with reward, e.g., the likelihood of reaching a goal set within a prescribed time.

In what follows, the problem under consideration in this paper is formally stated.
\begin{Problem}\label{prob.problem}
    Consider a stochastic, nonlinear system \eqref{eq.stochastic_system}, a state observer of the form \eqref{eq.state_observer}, and a goal set $\S_g$ given by \eqref{eq.goal_set}. Design an estimate-feedback controller of the form \eqref{eq.controller} such that, given a specification $p_g^* \in (0,1)$, the set $\S_g$ is rendered FxTS with probability $p_g^*$.
\end{Problem}

\section{Risk-Aware Fixed-Time Stabilization}\label{sec.ra_lfs}

This section introduces two approaches to risk-aware fixed-time stabilization, namely the risk-aware fixed-time control Lyapunov function (RA-FxT-CLF) and the risk-aware path integral control Lyapunov function (RA-PI-CLF), the use of either of which will be shown to render a goal set fixed-time stable with probability $p_g^*$.

First, suppose that there exist $c_1,c_2 > 0$, $\gamma_1 \in (0,1)$, and $\gamma_2 > 1$ for which
\begin{equation}\label{eq.clf_time_constraint}
    T_g \triangleq \frac{1}{c_1(1 - \gamma_1)} + \frac{1}{c_2(\gamma_2 - 1)} \leq T,
\end{equation}
such that $T_g \in \T$ is within the interval of system operation. Let $\eta(\phi, \S): \mathcal{C}^2_0 \times 2^\mathcal{X}$ be a robustness measure defined by
\begin{equation}\label{eq.eta_func}
    \eta(\phi, \S) \triangleq \sup_{\bx \in \S}\left\|\frac{\partial \phi}{\partial \bx}\sigma_x(\bx)\right\|.
\end{equation}
With the state unavailable for measurement, it is not possible to use the generator $\L^{\bx}V$ for control design. Instead, consider the generator of the observer process $\bxh_t$, given by
\small{
\begin{equation*}
\begin{aligned}
    \L^{\bxh}V(\bx,\bxh,\bb{u}) = \frac{\partial V}{\partial \bxh}\big(f(&\bxh, \bb{u}) + \bb{K}(h(\bx) - h(\bxh))\big) \\
    &+ \frac{1}{2}\textrm{Tr}\left(\sigma_y(\bxh)^\top\bb{K}^\top\frac{\partial^2 V}{\partial \bxh^2}\bb{K}\sigma_y(\bxh)\right),
\end{aligned}
\end{equation*}
}\normalsize
and note that it cannot be determined exactly due to $h(\bx)$. To overcome this and to introduce the subsequent lemma, the following assumption is made throughout the remainder.
\begin{Assumption}\label{ass.lipschitz_and_boundedness}
    Over the domain $\X \setminus \S_g$,
    \begin{enumerate}
        \item the functions $f(\cdot,\bb{u}), \forall \bb{u} \in \mathcal{U}$, and $\sigma_x$, $h$, $\sigma_y$, $V$, $\frac{\partial V}{\partial \bb{x}}$, and $\frac{\partial^2 V}{\partial \bx^2}$ are locally Lipschitz with known Lipschitz constants $\lambda_i$ for $i \in \{f, \sigma_x, h, \sigma_y, \partial_{\bx} V, \partial^2_{\bx} V\}$, and
        \item the functions $f(\cdot, \bb{u}), \forall \bb{u} \in \mathcal{U}$, and $\sigma_x$, $\sigma_y$, $\frac{\partial V}{\partial \bx}$, and $\frac{\partial^2 V}{\partial \bx^2}$ are bounded with known bounds $\beta_i$ for $i \in \{f, \sigma_x, \sigma_y, \partial_{\bx} V, \partial^2_{\bx} V\}$.
    \end{enumerate}
\end{Assumption}
\begin{Lemma}\label{lem.Lphi_Lipschitz}
    Suppose that Assumptions \ref{ass.bounded_observer_error} and \ref{ass.lipschitz_and_boundedness} hold.
    Then, 
\begin{equation*}
    \P\left\{|\L^{\bx}V(\bx,\bb{u}) - \bar{\L}^{\bxh}V(\bxh,\bb{u})| \leq \epsilon\lambda_{\L V}, \forall t \in \T\right\} \geq 1 - \delta,
\end{equation*}
and, for any sample path $\omega \in \Omega$,
\begin{equation*}
    \P\left\{|I_V(t,\omega) - \bar I_{\hat V}(t,\omega)| \leq t\epsilon\lambda_{\L V}, \forall t \in \T\right\} \geq 1 - \delta,
\end{equation*}
where
\begin{align}
    \bar{\L}^{\bxh}V(\bxh, \bb{u}) &\triangleq \frac{\partial V}{\partial \bxh}f(\bxh, \bb{u}) + \lambda_h\epsilon\left\|\frac{\partial V}{\partial \bxh}\bb{K}\right\| \label{eq.Lbar_xhat} \\ &\quad\quad\quad\quad + \frac{1}{2}\textrm{Tr}\left(\sigma_y(\bxh)^\top\bb{K}^\top\frac{\partial^2 V}{\partial \bxh^2}\bb{K}\sigma_y(\bxh)\right), \nonumber \\
    \bar I_{\hat V}(t,\omega) &\triangleq \int_0^t\bar{\L}^{\bxh} V(\bxh_s(\omega),\bb{u}_s(\omega))\d s, \label{eq.Ihat}
\end{align}
and
$\lambda_{\L V} = \beta_f\lambda_{\partial_{\bx}V} + \beta_{\partial_{\bx}V}\lambda_f + \beta_{\partial_{\bx}V}\beta_K\lambda_h + \lambda_p + \lambda_q$, with $\lambda_p$ and $\lambda_q$ being the local Lipschitz constants of $p(\bx) = \frac{1}{2}\mathrm{Tr}[\sigma_x(\bx)^\top\frac{\partial^2 V}{\partial\bx^2}\sigma_x(\bx)]$ and $q(\bx) = \frac{1}{2}\mathrm{Tr}[\sigma_y(\bx)^\top\bb{K}^\top\frac{\partial^2 V}{\partial\bx^2}\bb{K}\sigma_y(\bx)]$ respectively.
\end{Lemma}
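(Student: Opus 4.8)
The plan is to prove both inequalities on a single high-probability event and reduce each to a deterministic, pathwise estimate. First I would invoke Assumption~\ref{ass.bounded_observer_error} with $\epsilon = \epsilon(\delta)$ and set $E_\delta \triangleq \{\sup_{t\ge 0}\|\bx_t - \bxh_t\|\le\epsilon\}$, so that $\P(E_\delta)\ge 1-\delta$; on $E_\delta$ one has $\|\bx_t(\omega)-\bxh_t(\omega)\|\le\epsilon$ for every $t\in\T$ and $\omega\in E_\delta$. Since $\bb{u}_t = k(t,\bxh_t)$ is common to both expressions, the gap $\L^{\bx}V(\bx_t,\bb{u}_t) - \bar{\L}^{\bxh}V(\bxh_t,\bb{u}_t)$ is a function of $(\bx_t,\bxh_t,t)$ only, so it suffices to bound it deterministically by $\epsilon\lambda_{\L V}$ whenever $\|\bx_t-\bxh_t\|\le\epsilon$; both displayed claims then rest on the same event $E_\delta$, with no union bound required.

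For the pathwise bound I would write $\L^{\bx}V(\bx,\bb{u}) = \frac{\partial V}{\partial\bx}(\bx)f(\bx,\bb{u}) + p(\bx)$ with $p$ as in the statement, subtract $\bar{\L}^{\bxh}V$ from \eqref{eq.Lbar_xhat} term by term, and group the difference into three parts: (i) the first-order gap $\frac{\partial V}{\partial\bx}(\bx)f(\bx,\bb{u}) - \frac{\partial V}{\partial\bxh}(\bxh)f(\bxh,\bb{u})$; (ii) the innovation surrogate $\lambda_h\epsilon\|\frac{\partial V}{\partial\bxh}\bb{K}\|$, which $\bar{\L}^{\bxh}V$ substitutes for the uncomputable term $\frac{\partial V}{\partial\bxh}\bb{K}(h(\bx)-h(\bxh))$; and (iii) the second-order gap between the trace terms, which I would handle by replacing the true state by $\bxh$ inside $p$ and inside the observer's diffusion correction $q$. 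For (i), inserting $\frac{\partial V}{\partial\bxh}(\bxh)f(\bx,\bb{u})$ and using $\|ab-a'b'\|\le\|a-a'\|\,\|b\|+\|a'\|\,\|b-b'\|$ with $\lambda_f,\lambda_{\partial_{\bx}V},\beta_f,\beta_{\partial_{\bx}V}$ from Assumption~\ref{ass.lipschitz_and_boundedness} bounds it by $(\beta_f\lambda_{\partial_{\bx}V}+\beta_{\partial_{\bx}V}\lambda_f)\epsilon$ on $E_\delta$. For (ii), $\|\frac{\partial V}{\partial\bxh}\bb{K}\|\le\beta_{\partial_{\bx}V}\beta_K$ gives the bound $\beta_{\partial_{\bx}V}\beta_K\lambda_h\epsilon$, and the same quantity also dominates $|\frac{\partial V}{\partial\bxh}\bb{K}(h(\bx)-h(\bxh))|\le\beta_{\partial_{\bx}V}\beta_K\lambda_h\|\bx-\bxh\|$, which legitimizes the over-approximation. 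For (iii), I would use that $p$ and $q$ are locally Lipschitz with constants $\lambda_p,\lambda_q$ --- these are well defined by Assumption~\ref{ass.lipschitz_and_boundedness}, since transposes, products and traces of bounded, locally Lipschitz matrix-valued maps on the bounded working set are again locally Lipschitz --- so the trace contribution is at most $(\lambda_p+\lambda_q)\epsilon$. Collecting the three constants reproduces exactly $\lambda_{\L V}$, which establishes the first claim on $E_\delta$.

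The second claim then follows on the same event by integrating the first: for $\omega\in E_\delta$ and $t\in\T$,
\[
|I_V(t,\omega) - \bar I_{\hat V}(t,\omega)| \;\le\; \int_0^t \big|\L^{\bx}V(\bx_s,\bb{u}_s) - \bar{\L}^{\bxh}V(\bxh_s,\bb{u}_s)\big|\,\d s \;\le\; t\,\epsilon\,\lambda_{\L V}.
\]
I expect the only delicate step to be (iii): checking that the second-order terms --- which intertwine $\sigma_x$, $\sigma_y$, $\bb{K}$ and $\frac{\partial^2 V}{\partial\bx^2}$ --- are genuinely Lipschitz on the relevant domain so that $\lambda_p$ and $\lambda_q$ exist, and being careful in each matrix product about which factor supplies a Lipschitz constant and which a norm bound; steps (i) and (ii) and the integration are routine triangle-inequality bookkeeping on $E_\delta$.
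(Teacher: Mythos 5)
Your event construction via Assumption \ref{ass.bounded_observer_error}, your steps (i) and (ii), and the pathwise integration giving the second claim from the first are all sound, and they are consistent with what the paper intends (its own proof is omitted as following "directly" from Assumptions \ref{ass.bounded_observer_error} and \ref{ass.lipschitz_and_boundedness}). The genuine problem is exactly the step you flagged as delicate, (iii), and it is not merely delicate --- as written it does not go through. On your event $E_\delta$ the difference you must bound contains the trace gap $p(\bx_t)-q(\bxh_t)$, where $p(\bx)=\frac{1}{2}\mathrm{Tr}[\sigma_x(\bx)^\top\frac{\partial^2 V}{\partial\bx^2}\sigma_x(\bx)]$ comes from the diffusion $\sigma_x$ of the true process while $q(\bxh)=\frac{1}{2}\mathrm{Tr}[\sigma_y(\bxh)^\top\bb{K}^\top\frac{\partial^2 V}{\partial\bxh^2}\bb{K}\sigma_y(\bxh)]$ comes from the observer's effective diffusion $\bb{K}\sigma_y$. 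Lipschitz continuity of $p$ and $q$ only controls $|p(\bx)-p(\bxh)|\le\lambda_p\epsilon$ and $|q(\bx)-q(\bxh)|\le\lambda_q\epsilon$; decomposing $p(\bx)-q(\bxh)=[p(\bx)-p(\bxh)]+[p(\bxh)-q(\bxh)]$ leaves the structural mismatch $p(\bxh)-q(\bxh)$, a difference of two \emph{different} functions at the \emph{same} point, which is generically $O(1)$ rather than $O(\epsilon)$ (take $\bb{K}\to 0$: then $q\equiv 0$ while $p$ stays bounded away from zero wherever $\sigma_x^\top\frac{\partial^2 V}{\partial\bx^2}\sigma_x$ has nonzero trace). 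Your sentence "the trace contribution is at most $(\lambda_p+\lambda_q)\epsilon$" therefore asserts precisely the bound that needs proof, and no combination of the constants supplied by Assumption \ref{ass.lipschitz_and_boundedness} yields it.

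To be fair, this is not a defect you introduced: the constant $\lambda_{\L V}$ in the lemma statement has the same structure ($\lambda_p+\lambda_q$ and nothing else for the second-order terms), so the paper's omitted argument leaves the same term unaccounted for. To close your proof you would need one of the following: an additional additive term of the form $\sup_{\bx\in\X\setminus\S_g}|p(\bx)-q(\bx)|$ in the error bound (which destroys its $O(\epsilon)$ character), an extra hypothesis tying the two diffusions together (e.g., that the gain $\bb{K}$ is chosen so that $\bb{K}\sigma_y$ matches $\sigma_x$ on the relevant domain), or at least a one-sided domination $p\le q$ there, which would salvage the one-sided inequality $\L^{\bx}V(\bx,\bb{u})\le\bar{\L}^{\bxh}V(\bxh,\bb{u})+\epsilon\lambda_{\L V}$ that is what Theorem \ref{thm.ra_fxt_clf} actually uses. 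A smaller bookkeeping point: Assumption \ref{ass.lipschitz_and_boundedness} provides the Lipschitz constants and bounds only on $\X\setminus\S_g$, so you should also justify that both $\bx_t$ and $\bxh_t$ remain in (a neighborhood of) that set whenever you invoke them; steps (i), (ii) and the integration are otherwise routine, as you say.
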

\begin{proof}
    Follows directly from the application of Assumptions \ref{ass.bounded_observer_error} and \ref{ass.lipschitz_and_boundedness}, and is omitted for brevity.
\end{proof}
The above provides a probabilistic error bound on $|\L^{\bx}V - \bar\L^{\bxh}V|$ using only known quantities, and is instrumental in the derivation of the main results to follow.

\subsection{Risk-Aware Fixed-Time CLF}
We now formally introduce the RA-FxT-CLF and illustrate how its existence certifies a set $\S_g$ as $p_g^*$-FxTS under stochastic dynamics and uncertain measurements.
\begin{Definition}\label{def.ra_fxt_clf}
    Suppose that Assumptions \ref{ass.bounded_observer_error} and \ref{ass.lipschitz_and_boundedness} hold, and consider the set $\S_g$ defined by \eqref{eq.goal_set} for a twice continuously differentiable function $V: \X \mapsto \R$ satisfying \eqref{eq.V_pos_decrescent} and \eqref{eq.V0_r}. The function $V$ is a \textbf{risk-aware fixed-time control Lyapunov function} (RA-FxT-CLF) for the interconnected system $($\eqref{eq.stochastic_system}, \eqref{eq.observer_controller_system}$)$ w.r.t. $\S_g$ if there exists $\hat p_g \in (\pi_g,1)$ such that, on every sample path $\omega \in \Omega$, the following holds $\forall \bxh \in \{\bxh \in \X \mid V_r(\bxh) > 0\}$,
    \begin{equation}\label{eq.rafxtclf_condition}
         \small\inf_{\bb{u} \in \mathcal{U}}\bar{\L}^{\bxh}V(\bxh(\omega),\bb{u}) \leq -c_1V_r(\bxh(\omega))^{\gamma_1} - c_2V_r(\bxh(\omega))^{\gamma_2} - \epsilon\lambda_{\L V},
    \end{equation}
    where $V_r(\bb{z}) \triangleq V(\bb{z}) + r$ for
    \begin{equation}\label{eq.ra_fxt_r}
        r = \sqrt{2T_g}\vartheta\erf^{-1}(2 \hat p_g - 1) + \epsilon\lambda_V,
    \end{equation}
    and $\pi_g = \frac{1}{2}(1 + \erf(\frac{\lambda_V\epsilon}{\vartheta\sqrt{2T_g}}))$, 
    with $\vartheta = \eta(V,\mathcal{O}_{g,r})$ for $\eta$ defined by \eqref{eq.eta_func} and $\mathcal{O}_{g,r} = \{\bx \in \X \mid V(\bx) > -r\}$.
\end{Definition}

\begin{Theorem}\label{thm.ra_fxt_clf}
    If $V$ is a RA-FxT-CLF for the interconnected system $($\eqref{eq.stochastic_system}, \eqref{eq.observer_controller_system}$)$ w.r.t. $\S_g$, then the set $\S_g$ is rendered $p_g$-FxTS with probability $p_g = \hat p_g(1 - \delta)$.
\end{Theorem}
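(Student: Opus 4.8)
The plan is to prove that, under a controller that realizes \eqref{eq.rafxtclf_condition} pointwise in $\bxh$, the trajectory $\bx_t$ enters $\S_g$ at some time $T(\bx_0)\le T_g$ with probability at least $p_g=\hat p_g(1-\delta)$, uniformly in $\bx_0\in\X$; this gives the fixed-time clause and, with the settling-time function taken as $T_s\equiv T_g$, the limit-attractivity clause $\P\{\lim_{t\to T_s}\|\bb{\varphi}_t(\bx_0,\pi_{\bb{u}})\|_{\S_g}=0\}\ge p_g$ of Definition~\ref{def.stochastically_FTS}. First I would apply It\^o's formula to $V(\bx_t)$ along \eqref{eq.stochastic_dynamics} under \eqref{eq.controller}, giving $V(\bx_t)=V(\bx_0)+I_V(t,\omega)+M_t$ where $M_t=\int_0^t\frac{\partial V}{\partial\bx}(\bx_s)\sigma_x(\bx_s)\,\d\bb{w}_s$ is a continuous martingale with $\langle M\rangle_t=\int_0^t\|\frac{\partial V}{\partial\bx}(\bx_s)\sigma_x(\bx_s)\|^2\,\d s\le\vartheta^2 t$ so long as $\bx_s\in\mathcal{O}_{g,r}$ (which holds before $\bx$ reaches $\S_g$, since $\mathcal{S}_g^c\subset\mathcal{O}_{g,r}$), by definition of $\vartheta=\eta(V,\mathcal{O}_{g,r})$. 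On the observer-error event $E_\delta=\{\sup_{t\ge0}\|\bx_t-\bxh_t\|\le\epsilon\}$, which satisfies $\P(E_\delta)\ge1-\delta$ by Assumption~\ref{ass.bounded_observer_error}, Lemma~\ref{lem.Lphi_Lipschitz} gives $\L^{\bx}V(\bx_s,\bb{u}_s)\le\bar{\L}^{\bxh}V(\bxh_s,\bb{u}_s)+\epsilon\lambda_{\L V}$, which the controller drives at or below $-c_1V_r(\bxh_s)^{\gamma_1}-c_2V_r(\bxh_s)^{\gamma_2}$; using $|V(\bx)-V(\bxh)|\le\lambda_V\epsilon$ and monotonicity of $z\mapsto z^{\gamma_i}$, this is at most $-\alpha(\hat V_s)$, where $\hat V_s\triangleq V(\bx_s)+\tilde r$, $\tilde r\triangleq r-\epsilon\lambda_V=\sqrt{2T_g}\vartheta\,\erf^{-1}(2\hat p_g-1)$, and $\alpha(v)\triangleq c_1 v^{\gamma_1}+c_2 v^{\gamma_2}$. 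Hence on $E_\delta$, for $t$ up to the hitting time $\tau_g$ of $\S_g$, $I_V(t,\omega)\le-\int_0^t\alpha(\hat V_s)\,\d s$, so $\hat V_{t\wedge\tau_g}\le\hat V_0-\int_0^{t\wedge\tau_g}\alpha(\hat V_s)\,\d s+M_{t\wedge\tau_g}$.

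Next I would carry out a risk-aware analogue of Polyakov's fixed-time comparison: since the deterministic flow $\dot\phi=-\alpha(\phi)$ reaches $0$ from any positive initial value within $T_g=\frac{1}{c_1(1-\gamma_1)}+\frac{1}{c_2(\gamma_2-1)}$, one may absorb the martingale term into a shifted comparison trajectory and conclude that, on $E_\delta$, $\hat V_t$ attains the level $\tilde r$ — i.e.\ $\bx_t$ enters $\S_g$ — by time $T_g$ whenever $M_{T_g}<\tilde r$, the relevant level being precisely the extra margin $\tilde r$ built into $V_r$ by \eqref{eq.ra_fxt_r}. I would then estimate $\P\{M_{T_g}<\tilde r\}$ by reducing $M$ to a Wiener process through a time change (controlling its spread via Lemma~\ref{lem.multidim_ito_isometry} and the pathwise bound $\langle M\rangle_{T_g}\le\vartheta^2 T_g$) and invoking Lemma~\ref{lem.wiener_crossing_probability}, obtaining $\P\{M_{T_g}<\tilde r\}\ge\tfrac12\big(1+\erf(\tfrac{\tilde r}{\vartheta\sqrt{2T_g}})\big)=\hat p_g$; the expressions for $r$ and $\pi_g$ in Definition~\ref{def.ra_fxt_clf} are chosen precisely so that this identity holds, with $\hat p_g\in(\pi_g,1)$ guaranteeing $\tilde r>\epsilon\lambda_V>0$.

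Combining, $\P\{T(\bx_0)\le T_g\}\ge\P\big(E_\delta\cap\{M_{T_g}<\tilde r\}\big)\ge\hat p_g(1-\delta)=p_g$, obtained by conditioning on $E_\delta$ and transferring the martingale estimate to the conditional law; since $T_g$ does not depend on $\bx_0$, this holds for every $\bx_0\in\X$, which establishes item~2 of both definitions. The remaining stability-in-probability clause (item~1 of Definition~\ref{def.stochastically_FTS}) follows from the same ingredients: for $\bx_0$ near $\S_g$, $V(\bx_0)$ is small, the drift of $V(\bx_t)$ is negative on $E_\delta$, and Lemma~\ref{lem.wiener_crossing_probability} bounds $\P\{\sup_t\|\bb{\varphi}_t(\bx_0,\pi_{\bb{u}})\|_{\S_g}>\chi\}$.

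The step I expect to be the main obstacle is the risk-aware comparison: making rigorous, pathwise on $E_\delta$, that failing to reach $\S_g$ by $T_g$ forces $M$ above exactly the level $\tilde r$ over the horizon $T_g$ — delicate because the fixed-time vector field $\alpha$ is nonsmooth at the origin and $M$ has random quadratic variation — together with the probabilistic bookkeeping: extracting from Lemma~\ref{lem.wiener_crossing_probability} the sharp bound $\hat p_g=\tfrac12(1+\erf(\cdot))$ rather than the weaker $\erf(\cdot)$ a naive supremum argument would give, and cleanly justifying the conditioning that yields the product $\hat p_g(1-\delta)$ instead of the Fr\'echet bound $\hat p_g-\delta$.
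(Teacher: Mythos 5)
Your overall bookkeeping (the It\^o expansion of $V(\bx_t)$, the event $E_\delta$ from Assumption~\ref{ass.bounded_observer_error}, Lemma~\ref{lem.Lphi_Lipschitz} supplying the $\epsilon\lambda_{\L V}$ margin, and the final product $\hat p_g(1-\delta)$) is sensible, but the central step of your plan --- the ``risk-aware Polyakov comparison'' --- is a genuine gap, and in the terminal-value form you state it is not true. From $\hat V_t\le \hat V_0-\int_0^t\alpha(\hat V_s)\,\d s+M_t$ and $\hat V_s>\tilde r$ on $[0,T_g]$ you can only conclude $M_{T_g}> \tilde r-\hat V_0+\int_0^{T_g}\alpha(\hat V_s)\,\d s$, and this need not exceed $\tilde r$: with $V(\bx_0)$ near $\gamma_V$, there are admissible paths (consistent with $\d\langle M\rangle_t\le\vartheta^2\d t$) on which $M$ first decreases by roughly $V(\bx_0)$, pulling $\hat V$ down to just above the level $\tilde r$ without crossing it, and then increases at rate about $\alpha(\tilde r)$ while the state hovers just outside $\S_g$ through time $T_g$; then $M_{T_g}$ stays below $\tilde r$ whenever $\gamma_V$ is large relative to $T_g\,\alpha(\tilde r)$, yet the goal is never reached. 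Any pathwise comparison of this kind intrinsically requires control of the running behavior of $M$ (at least $\sup_{t\le T_g}M_t$), and Lemma~\ref{lem.wiener_crossing_probability} then yields only $\erf\bigl(\tilde r/(\vartheta\sqrt{2T_g})\bigr)$ --- exactly the weaker bound you flag --- which does not match the $\tfrac12\bigl(1+\erf(\cdot)\bigr)$ calibration built into $r$ in \eqref{eq.ra_fxt_r}. So the two items you defer to the end (the sharp terminal-value bound and the product $\hat p_g(1-\delta)$ versus the Fr\'echet bound) are not finishing touches; they are the crux, and the proposed route does not close them.

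For contrast, the paper does not attempt a pathwise comparison at all. In the idealized state-feedback case it invokes the expectation-based stochastic FxTS result \cite[Cor.~3.4]{Yu2019Fixed}: if $\inf_{\bb u}\L^{\bx}V\le -c_1V_b^{\gamma_1}-c_2V_b^{\gamma_2}$, trajectories reach the shrunk set $\{V\le -b\}$ by $T_g$ in the expectation sense, which is used to center the law of $V(\bx_{T_g})$ at $-b$; the It\^o integral $\int_0^t\frac{\partial V}{\partial\bx}\sigma_x(\bx_s)\,\d\bb w_s$ is stochastically dominated by $\vartheta w_t$ (Lemma~\ref{lem.multidim_ito_isometry} and $\vartheta=\eta(V,\mathcal O_{g,r})$), so $\P\{V(\bx_{T_g})\le 0\}\ge\tfrac12\bigl[1+\erf\bigl(b/(\vartheta\sqrt{2T_g})\bigr)\bigr]$, and $b=r-\epsilon\lambda_V$ is chosen to make this equal $\hat p_g$ (this is where $\erf^{-1}(2\hat p_g-1)$ and the threshold $\pi_g$ come from). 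The observer error enters only through Lemma~\ref{lem.Lphi_Lipschitz} --- condition \eqref{eq.rafxtclf_condition} implies the idealized condition with probability $1-\delta$, with the Lipschitz offset $\epsilon\lambda_V$ absorbed into $r$ --- giving $p_g=\hat p_g(1-\delta)$. To repair your argument you would either need to import such an expectation-based settling result to anchor the terminal distribution, or recalibrate $r$ to a running-supremum estimate; the latter is essentially the RA-PI-CLF route of Theorem~\ref{thm.ra_lf}, not Theorem~\ref{thm.ra_fxt_clf}.
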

\begin{proof}
    We first provide a derivation for the idealized case ($\bxh \equiv \bx$), and then use it to prove the main result.

    It follows from It$\hat{\mathrm{o}}$'s Formula \cite[Thm. 4.2.1]{Oksendal2003Stochastic} that $\forall t \in \T$,
    \begin{equation}
        \text{d}V(\bx_t) = \L^{\bx} V(\bx_t, \bb{u}_t)\dt+ \frac{\partial V}{\partial \bx}\sigma_x(\bx_t)\text{d}\bb{w}_t, \nonumber
    \end{equation}
    which implies that, for sample path $\omega \in \Omega$, $V(\bx_t(\omega)) \sim V(\bx_0) + I_V(t, \omega) + I_V^S(t, \omega)$, where $I_V(t, \omega)$ is of the form \eqref{eq.integrated_generator} and $I_V^S(t, \omega) \triangleq \int_0^t\frac{\partial V}{\partial \bx}\sigma_x(\bx_s(\omega))\text{d}\bb{w}_s$, which, henceforth omitting $\omega$, is an It$\hat{\mathrm{o}}$ integral \cite[Def. 3.1.6]{Oksendal2003Stochastic} distributed via
    \begin{equation*}
        I_V^S(t) \sim \mathcal{N}\left(0, \; \mathbb{E}\left[\left(\int_0^t\frac{\partial V}{\partial \bx}\sigma_x(\bx_s)\text{d}\bb{w}_s\right)^2\right]\right).
    \end{equation*}
    From Lemma \ref{lem.multidim_ito_isometry}, it follows that
    \footnotesize\begin{equation*}
        \mathbb{E}\left[\left(\int_0^t\frac{\partial V}{\partial \bx}\sigma_x(\bx_s)\text{d}\bb{w}_s\right)^2\right] =\mathbb E\left[\int_0^t\left\|\frac{\partial V}{\partial \bx}\sigma_x(\bx_s)\right\|^2\text{d}s\right] \triangleq s^2(t),
    \end{equation*}\normalsize
    which implies $V(\bx_t) \sim  \mathcal{N}(\mu_V(t), s^2(t))$, where $\mu_V(t) = V(\bx_0) + I_V(t)$. Now, consider a normally distributed random variable 
    \begin{equation*}
        \bar V(\bx_t) \sim V(\bx_0) + I_V(t) + \bar I_V^S(t),
    \end{equation*}
    where $\bar I_V^S(t) \triangleq \vartheta w_t \sim \mathcal{N}(0, \vartheta^2 t)$ for a 1D standard Wiener process $w_t$, noting that $\vartheta^2 t = \int_0^t\vartheta^2 \text{d}s$. With $\int_0^t\vartheta^2\text{d}s \geq s^2(t)$ by construction, it follows from Gaussian properties that
    $\P\{\bar I_V^S(t) > a\} > \P\{I_V^S(t) > a\}$, for all $a, t > 0$. As such, for any $b\geq 0$, $\P\{V_b(\bx_t) \leq 0\} \geq \P\{\bar V_b(\bx) \leq 0\}$, where $V_b(\bb{z}) = V(\bb{z}) + b$, $\bar V_b(\bb{z}) = \bar V(\bb{z}) + b$, and $\L^{\bx} V_b = \L^{\bx} V$.
    
    Now, observe that by \cite[Cor. 3.4]{Yu2019Fixed} it is true that if
    \begin{equation}\label{eq.ideal_ra_fxt_clf_condition}
        \inf_{\bb{u} \in \mathcal{U}}\L^{\bx} V(\bx,\bb{u}) \leq -c_1V_b^{\gamma_1}(\bx) - c_2V_b^{\gamma_2}(\bx), \forall \bx \in \mathcal{X},
    \end{equation}
    then the set $\S_g$ is stochastically FxTS in probability, which implies that $\mathbb{E}[T(\bx_0)] \leq T_g$, $\forall \bx_0 \in \X_0$ with $T_g$ given by \eqref{eq.clf_time_constraint}, i.e., that $\forall \bx_0 \in \X_0$, $\bx_t \rightarrow \S_{g,b} = \{\bb{x} \in \X \mid V(\bx) \leq -b\}$ as $t \rightarrow T_g$ with probability $0.5$. This implies that $V(\bx_{T_g}) \sim \mathcal{N}(-b, s^2(T_g))$ and consequently $\bar V(\bx_{T_g}) \sim \mathcal{N}(-b,\vartheta^2T_g)$. It then follows from the Gaussian cumulative distribution function that 
    \begin{equation*}
        \P\left\{\bar V(\bx_{T_g}) \leq 0\right\} = \frac{1}{2}\left[1 + \erf\left(\frac{b}{\vartheta\sqrt{2T_g}}\right)\right],
    \end{equation*}
    and therefore, choosing $\hat p_g \in (\pi_g,1)$ and solving $\hat p_g = \frac{1}{2}\left[1 + \erf\left(\frac{b}{\vartheta\sqrt{2T_g}}\right)\right]$ for $b$ we obtain that $b = r - \epsilon\lambda_{V}$ with $r$ given by \eqref{eq.ra_fxt_r}. Thus, when \eqref{eq.ideal_ra_fxt_clf_condition} is satisfied it follows that the set $\S_g$ is $\hat p_g$-FxTS, i.e., that $\bb{x}_t \rightarrow \S_g$ as $t \rightarrow T_g$ with probability $\hat p_g$. 
    
    Now, by the Lipschitz property of $V$, consider that $\forall b \geq 0$, $\forall \bx, \bxh \in \X$, $V_b(\bx) \leq V_b(\bxh) + \epsilon\lambda_V$ with probability $1-\delta$. Thus, for \eqref{eq.ideal_ra_fxt_clf_condition} to be satisfied with probability $1-\delta$ the following condition must hold:
    \begin{equation*}
        \inf_{\bb{u} \in \mathcal{U}}\L^{\bx} V(\bx,\bb{u}) \leq -c_1V_r^{\gamma_1}(\bxh) - c_2V_r^{\gamma_2}(\bxh), \forall \bx \in \mathcal{X}.
    \end{equation*}
    Then, by Lemma \ref{lem.Lphi_Lipschitz} it follows that the satisfaction of \eqref{eq.rafxtclf_condition} implies \eqref{eq.ideal_ra_fxt_clf_condition} with probability $1 - \delta$. Therefore, the set $\S_g$ is $p_g$-FxTS with $p_g=(1-\delta)\hat p_g$. This completes the proof.


    
\end{proof}

When the state is known exactly ($\bxh_t \equiv \bx_t$), the only required modification to the RA-FxT-CLF for use in control design is to set $\delta = \epsilon = 0$.

\subsection{Risk-Aware Path Integral CLF}
A potential drawback to using the RA-FxT-CLF given by Definition \ref{def.ra_fxt_clf} for controller design and/or verification is that $\hat p_g \geq \pi_g$, which, for large $\lambda_V$ or $\epsilon$ may produce $\pi_g \rightarrow 1$. This may make \eqref{eq.rafxtclf_condition} difficult to satisfy in practice, especially in the presence of other hard system constraints (like safety). The following notion of the RA-PI-CLF, which is inspired by the risk-aware control barrier functions introduced by \cite{black2023safety} and allows for arbitrary $\hat p_g \in [0.5,1)$ (thus opening up the interval $[0.5, \pi_g)$), helps mitigate this issue.
\begin{Definition}\label{def.ra_pi_clf}
    Suppose that Assumptions \ref{ass.bounded_observer_error} and \ref{ass.lipschitz_and_boundedness} hold, and consider a set $\S_g$ defined by \eqref{eq.goal_set} for a twice continuously differentiable function $V: \X \mapsto \R$ satisfying \eqref{eq.V_pos_decrescent} and \eqref{eq.V0_r}. 
    The function $V$ is a \textbf{risk-aware path integral control Lyapunov function} (RA-PI-CLF) for the interconnected system $($\eqref{eq.stochastic_system}, \eqref{eq.observer_controller_system}$)$ w.r.t. $\S_g$ if there exists $\hat p_g \in [0.5,1)$ such that on every sample path $\omega \in \Omega$, the following holds $\forall t \leq T$,
    \begin{equation}\label{eq.rapiclf_condition}
        \inf_{\bb{u} \in \mathcal{U}}\bar{\L}^{\bxh}V(\bxh_t(\omega),\bb{u}) \leq -c_1W(t, \omega)^{\gamma_1} - c_2W(t, \omega)^{\gamma_2},
    \end{equation}
    where 
    \begin{equation}\label{eq.w_clf}
        W(t, \omega) = \bar I_{\hat V}(t, \omega) + \zeta\sqrt{2T}\erf^{-1}(\hat p_g) + \gamma_V + \epsilon T\lambda_{\L V},
    \end{equation}
    with $\bar I_{\hat V}$ defined according to \eqref{eq.Ihat} and $\zeta = \eta(V,\mathcal{O}_g)$ for $\eta$ defined by \eqref{eq.eta_func}.
\end{Definition}
\begin{Theorem}\label{thm.ra_lf}
    If $V$ is a RA-PI-CLF for the interconnected system $($\eqref{eq.stochastic_system}, \eqref{eq.observer_controller_system}$)$ w.r.t. the set $\S_g$, then the set $\S_g$ is rendered $p_g^*$-FxTS with probability $p_g^* = \hat p_g(1 - \delta)$.
\end{Theorem}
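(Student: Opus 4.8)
The plan is to mirror the structure of the proof of Theorem~\ref{thm.ra_fxt_clf}, but to replace the Gaussian-marginal argument at the terminal time with a trajectory-level argument built around the path integral $\bar I_{\hat V}$ and the level-crossing estimate of Lemma~\ref{lem.wiener_crossing_probability}. First I would apply It\^o's formula to $V$ along \eqref{eq.stochastic_dynamics}, exactly as in Theorem~\ref{thm.ra_fxt_clf}, to obtain, on a sample path $\omega$, $V(\bx_t) = V(\bx_0) + I_V(t,\omega) + I_V^S(t,\omega)$ with $I_V$ of the form \eqref{eq.integrated_generator} and $I_V^S(t) = \int_0^t \frac{\partial V}{\partial\bx}\sigma_x(\bx_s)\,\d\bb{w}_s$ an It\^o integral. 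By Lemma~\ref{lem.multidim_ito_isometry} the quadratic variation of $I_V^S$ satisfies $\int_0^t\|\frac{\partial V}{\partial\bx}\sigma_x(\bx_s)\|^2\,\ds \le \zeta^2 t$ as long as the trajectory remains in $\mathcal{O}_g$ (which it does until $\S_g$ is reached, at which point there is nothing left to prove), so, as in Theorem~\ref{thm.ra_fxt_clf}, $I_V^S$ is dominated by the scaled $1$D Wiener process $\zeta w_t$ in the sense that $\P\{\sup_{t\in\T}I_V^S(t) < a\} \ge \P\{\sup_{t\in\T}\zeta w_t < a\}$ for every $a > 0$.

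Next I would exploit the RA-PI-CLF inequality pathwise. Let $\bb{u}_t$ be a control realizing the infimum in \eqref{eq.rapiclf_condition}. Since $W(t,\omega)$ in \eqref{eq.w_clf} differs from $\bar I_{\hat V}(t,\omega)$ only by the constant $c_0 \triangleq \zeta\sqrt{2T}\erf^{-1}(\hat p_g) + \gamma_V + \epsilon T\lambda_{\L V}$, and $\frac{\d}{\dt}\bar I_{\hat V}(t,\omega) = \bar{\L}^{\bxh}V(\bxh_t(\omega),\bb{u}_t(\omega))$ by \eqref{eq.Ihat}, condition \eqref{eq.rapiclf_condition} is precisely the scalar differential inequality $\dot W(t,\omega) \le -c_1 W(t,\omega)^{\gamma_1} - c_2 W(t,\omega)^{\gamma_2}$ holding on every sample path while $W>0$; note $W(0,\omega) = c_0 > 0$ since $\hat p_g \ge 0.5$ gives $\erf^{-1}(\hat p_g)\ge 0$ and $\gamma_V > 0$. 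By the deterministic fixed-time comparison lemma (cf. \cite{polyakov2011nonlinear} and the argument of \cite[Cor. 3.4]{Yu2019Fixed}), this forces $W(t,\omega) \le 0$ for every $t \ge T_g$ and every $\omega$, with $T_g$ as in \eqref{eq.clf_time_constraint}, which also guarantees $T_g \le T$. Unwrapping $W$, this yields $\bar I_{\hat V}(t,\omega) \le -c_0$ for all $t \in [T_g,T]$, on every sample path.

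I would then transfer this bound to $I_V$ via Lemma~\ref{lem.Lphi_Lipschitz}: on an event $A$ with $\P\{A\}\ge 1-\delta$ one has $I_V(t,\omega) \le \bar I_{\hat V}(t,\omega) + t\epsilon\lambda_{\L V} \le \bar I_{\hat V}(t,\omega) + T\epsilon\lambda_{\L V}$ for all $t\in\T$. Combining with the previous step and with $V(\bx_0) \le \gamma_V$ from \eqref{eq.V0_r}, on $A$ we get $V(\bx_t) \le \gamma_V - c_0 + T\epsilon\lambda_{\L V} + I_V^S(t) = -\zeta\sqrt{2T}\erf^{-1}(\hat p_g) + I_V^S(t)$ for all $t\in[T_g,T]$. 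Finally, applying Lemma~\ref{lem.wiener_crossing_probability} together with the domination from the first step, with $a = \zeta\sqrt{2T}\erf^{-1}(\hat p_g)$, gives $\P\{\sup_{t\in\T}I_V^S(t) < \zeta\sqrt{2T}\erf^{-1}(\hat p_g)\} \ge \erf(\erf^{-1}(\hat p_g)) = \hat p_g$. On the intersection of this event with $A$ we have $V(\bx_t)\le 0$, i.e. $\bx_t\in\S_g$, for all $t\in[T_g,T]$; hence $\S_g$ is reached by the state-independent time $T_g$, so the settling time may be taken as $T_s\equiv T_g$ with $T_{max}=T_g\le T$, and $\P\{T(\bx_0)\le T_{max}\} \ge \hat p_g(1-\delta) = p_g^*$ for every $\bx_0\in\X$. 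The global stability-in-probability requirement (element~1 of Definition~\ref{def.stochastically_FTS}) follows from the same supermartingale-type estimate, since $\bar I_{\hat V}(t,\omega)\le 0$ while $W\ge 0$, exactly as in Theorem~\ref{thm.ra_fxt_clf}.

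The step I expect to be the main obstacle is the trajectory-level control of the martingale term $I_V^S$: unlike Theorem~\ref{thm.ra_fxt_clf}, which needs only the marginal law of $V$ at $T_g$, here $W$ is itself a random process and the conclusion must hold uniformly over $t\ge T_g$, which is exactly why the argument routes through the quadratic-variation bound $\int_0^t\|\frac{\partial V}{\partial\bx}\sigma_x(\bx_s)\|^2\,\ds\le\zeta^2 t$ on $\mathcal{O}_g$ and Lemma~\ref{lem.wiener_crossing_probability} rather than a pointwise Gaussian CDF (this is also what replaces the $\erf^{-1}(2\hat p_g-1)$ of \eqref{eq.ra_fxt_r} by the $\erf^{-1}(\hat p_g)$ of \eqref{eq.w_clf}). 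A secondary subtlety, handled exactly as in Theorem~\ref{thm.ra_fxt_clf}, is the mild conditioning/independence gloss in multiplying the observer-error probability $1-\delta$ by the crossing probability $\hat p_g$ to obtain $p_g^* = \hat p_g(1-\delta)$.
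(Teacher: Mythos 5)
Your proposal follows essentially the same route as the paper's own proof: It\^o decomposition of $V$, domination of the martingale term by $\zeta w_t$ via the quadratic-variation bound and Lemma~\ref{lem.wiener_crossing_probability}, the pathwise fixed-time comparison argument on $W$ (via \cite{polyakov2011nonlinear}) to force $\bar I_{\hat V}(t)\le -\zeta\sqrt{2T}\erf^{-1}(\hat p_g)-\gamma_V-\epsilon T\lambda_{\L V}$ on $[T_g,T]$, the transfer from $\bar I_{\hat V}$ to $I_V$ through Lemma~\ref{lem.Lphi_Lipschitz} at cost $1-\delta$, and the same $\sup\bar I_{\hat V}\le 0$ estimate for stability in probability. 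The only differences are organizational (you bound $V(\bx_t)$ directly and then invoke the level-crossing probability, while the paper phrases it through $\bar p_g=\P\{\inf_{t\in\T}\bar V\le 0\}$), and your acknowledged glosses (sup-domination, multiplying $\hat p_g$ by $1-\delta$) match the paper's own treatment.
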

\begin{proof}
    It will be shown that if $V$ is a RA-PI-CLF then $\S_g$ is 1) stable in probability, and 2) locally finite-time attractive with probability $p_g^*$ and uniformly bounded settling time.

    The first components of this proof mirror the proof of Theorem \ref{thm.ra_fxt_clf}, and thus we skip to comparing $\bar V$ and $V$ as follows: 
    $\forall c \geq 0$,
    \begin{equation*}
        \bar p_v(c) \triangleq \P\left\{\sup_{t \in \T}\bar V(\bx) \leq c\right\} \leq \P\left\{\sup_{t \in \T} V(\bx) \leq c\right\} \triangleq p_v(c).
    \end{equation*}
    Now, let $\mathcal B_c = \{\bx \in \X \mid V(\bx) \leq c\}$ and $R(c) = \sup_{\bx \in B_c}\|\bx\|$, and note that $R(c) < \infty$ for $c < \infty$ due to $V$ satisfying \eqref{eq.V_pos_decrescent}. Observe that for a given sample path $\omega \in \Omega$
    \begin{align*}
        \bar p_v(c) &= \P\left\{\sup_{t \in \T} \big[V(\bx_0) + I_V(t,\omega) + \zeta w_t\big] \leq c\right\}, \\
        & \geq \P\left\{\gamma_V + \sup_{t \in\T}I_V(t,\omega) + \sup_{t \in\T}\zeta w_t \leq c\right\}, \\
        &= \P\left\{\sup_{t \in\T}w_t < \frac{c -\gamma_V - \sup_{t \in\T}I_V(t,\omega)}{\zeta }\right\}.
    \end{align*}
    Since $\P\{\sup_{t \in \T}|I_V(t,\omega) - \bar I_{\hat V}(t,\omega)| \leq \epsilon t\lambda_{\L V}\} \geq 1 - \delta$ from Lemma \ref{lem.Lphi_Lipschitz}, it is true that $I_V(t,\omega) \leq \bar I_{\hat V}(t,\omega) + \epsilon t \lambda_{\L V} \leq \bar I_{\hat V}(t,\omega) + \epsilon T \lambda_{\L V}$ with probability of at least $1 - \delta$. Thus,
    \footnotesize{
    \begin{equation*}
        \bar p_v(c) \geq \P\left\{\sup_{t \in\T}w_t < \frac{c -\gamma_V - \sup_{t \in\T}\bar I_{\hat V}(t,\omega) - \epsilon T\lambda_{\L V}}{\zeta }\right\}(1 - \delta).
    \end{equation*}
    }\normalsize
    With $W(t,\omega) \geq 0$ whenever $\bar I_{\hat V}(t,\omega) \geq -\zeta \sqrt{2T}\erf^{-1}(\hat p_g) - \gamma_V - \epsilon T\lambda_{\L V}$, it follows by \eqref{eq.rapiclf_condition} that when $V$ is a RA-PI-CLF, $\L^{\bx} V \leq 0$ whenever $\bar I_{\hat V}(t,\omega) \geq -\zeta \sqrt{2T}\erf^{-1}(\hat p_g) - \gamma_V - \epsilon T\lambda_{\L V}$, and thus $\sup_{t \in \T}\bar I_{\hat V}(t,\omega) = 0$. Then, by Lemma \ref{lem.wiener_crossing_probability},
    \begin{equation*}
        \bar p_v(c) \geq \erf\left(\frac{c -\gamma_V - \epsilon T\lambda_{\L V}}{\zeta \sqrt{2T}}\right)(1 - \delta).
    \end{equation*}
    Taking the above with equality and setting $\varepsilon(c) = 1 - \bar p_v(c)$, it holds that, $\forall R(c) \geq 0$,
    \begin{equation*}
        \P\left\{\sup_{t \in \T}\|\bx(t; \bx_0)\| \leq R(c)\right\} \geq 1 - \varepsilon(c),
    \end{equation*}
    which by Definition \ref{def.stochastically_FTS} implies that $\S_g$ is stable in probability.

    Now, by similar arguments, it follows that
    \begin{equation*}
        \bar p_g \triangleq \P\left\{\inf_{t \in \T}\bar V(\bx) \leq 0\right\} \leq \P\left\{\inf_{t \in \T} V(\bx) \leq 0\right\} \triangleq p_g.
    \end{equation*}
    As such, following the same steps as above (omitting $\omega$),
    \begin{equation*}
        \bar p_g \geq \erf\left(\frac{-\gamma_V - \inf_{0 \leq t \leq T}\bar I_{\hat V}(t) - \epsilon T\lambda_{\L V}}{\zeta \sqrt{2T}}\right)(1 - \delta),
    \end{equation*}
    which implies that $\bar p_g \geq p^*_g = \hat p_g(1 - \delta)$ provided that $\exists \tau \in \T$ for which the following holds, $\forall t \in [\tau,T]$,
    \begin{equation*}
        \bar I_{\hat V}(t) \leq -\zeta \sqrt{2T}\erf^{-1}\big(\hat p_g\big) - \gamma_V - \epsilon T\lambda_{\L V}.
    \end{equation*}
    Therefore, the above is satisfied when the function $W$ given by \eqref{eq.w_clf} satisfies $W(t) \leq 0$, $\forall t \in [\tau, T]$. Note that $\dot W =\bar \L^{\bxh} V(\bxh,\bb{u})$, and so \eqref{eq.rapiclf_condition} is equivalent to $\dot W \leq -c_1W^{\gamma_1} - c_2W^{\gamma_2}$, which by \cite[Lemma 1]{polyakov2011nonlinear} renders $W$ fixed-time stable to the origin, i.e., $W \rightarrow 0$ as $t \rightarrow T(\bx_0) \leq T_g$ given by \eqref{eq.clf_time_constraint} and $W(t) = 0$ for all $t \in [T_g,T]$. Therefore, $\S_g$ is $p_g^*$-FxTS, i.e., finite-time attractive with probability $p_g^* = \hat p_g(1-\delta)$ and uniformly bounded settling time, $T(\bx_0) \leq T_g$.


\end{proof}




\subsection{Control Design for Risk-Aware FxT-Stabilization}

The following remark motivates the class of control laws proposed in this section for risk-aware control design.
\begin{Remark}\label{rem.every_sample_path}
    For a function $V$ to meet the criteria for Definitions \ref{def.ra_fxt_clf} or \ref{def.ra_pi_clf}, the respective condition (i.e., \eqref{eq.rafxtclf_condition} or \eqref{eq.rapiclf_condition}) is required to hold \textnormal{on every sample path $\omega \in \Omega$}. While this may render the problem of a priori system verification using these classes of functions challenging to solve, at runtime either condition (\eqref{eq.rafxtclf_condition} or \eqref{eq.rapiclf_condition}) is only required to hold for the \textnormal{realized} sample path. Thus, in practice RA-FxT-CLFs and RA-PI-CLFs may be used to filter a nominal control policy deployed in real time.
\end{Remark}
In accordance with the above Remark, the RA-FxT-CLF and RA-PI-CLF may be synthesized in the following optimization-based control law:
\begin{subequations}\label{eq.ra_clf_qp}
\begin{align}
    \bb{u}^* = \argmin_{\bb{u} \in \mathcal{U}} \frac{1}{2}\|\bb{u}&-\bb{u}^0\|^2 \label{subeq.qp_objective}\\
    \textrm{s.t.} \quad \nonumber \\
    \inf_{\bb{u} \in \mathcal{U}}\bar{\L}^{\bxh}V(\bxh_t(\omega),\bb{u}) &\leq \xi(t, \bxh(\omega), \omega), \label{subeq.ra_clf_qp_constraints}
\end{align}
\end{subequations}
where $\bb{u}^0 \in \mathcal{U}$ is a nominal or legacy controller such that the cost function \eqref{subeq.qp_objective} seeks to minimize the deviation of the solution $\bb{u}^*$ from the nominal input, and where the function $\xi: \mathcal{T} \times \R^n \times \Omega \mapsto \R$ is such that \eqref{subeq.ra_clf_qp_constraints} represents either \eqref{eq.rafxtclf_condition} (for RA-FxT-CLF) or \eqref{eq.rapiclf_condition} (for RA-PI-CLF). Note that when $f$ in \eqref{eq.stochastic_dynamics} is affine in the control, \eqref{eq.ra_clf_qp} is a quadratic program and thus may be solved efficiently online using commercial or open-source solvers.



\section{Numerical Case Studies}\label{sec.case_studies}
This section demonstrates the use of the proposed controllers via two numerical examples: first, an empirical study on an illustrative nonlinear system highlights the correctness of the theoretical results; and second, a kinematic model of a fixed-wing UAV illustrates their application to scenarios in which safety must be accounted for in control design.

\subsection{Empirical Study: 2D Nonlinear System}
Consider a nonlinear system whose state is $\bx = (x_1, x_2) \in \R^2$, a model for which is given by the following SDE: 
\begin{equation*}
    \begin{bmatrix}
        \mathrm{d}x_1 \\ \mathrm{d}x_2
    \end{bmatrix} = \left(c(\bx)\begin{bmatrix}
        x_2 \\ x_1
    \end{bmatrix} + \frac{1}{c(\bx)}\begin{bmatrix}
        1 & 0 \\ 0 & 1
    \end{bmatrix}\begin{bmatrix}
        u_1 \\ u_2
    \end{bmatrix}\right)\d t + \sigma_x(\bx)\d \bb{w},
\end{equation*}
where $\bb{u} = (u_1,u_2) \in \R^2$ is the control input vector, $c(\bx) = \sqrt{x_1^2 + x_2^2 - r^2}$ with $r = 0.05$, $\sigma_x(\bx) = \mathrm{diag}(2, 2) \in \R^{2\times 2}$, and $\bb{w}$ is a standard Wiener process. Note that here and in what follows subscripts are used to index vector components and not time. The measurement model is
\begin{equation*}
    \begin{bmatrix}
        \mathrm{d}y_1 \\ \mathrm{d}y_2
    \end{bmatrix} = \begin{bmatrix}
        x_1 \\ x_2
    \end{bmatrix} + \sigma_y(\bx)\d \bb{v},
\end{equation*}
where $\sigma_y(\bx) = \mathrm{diag}(0.25, 0.25) \in \R^{2\times 2}$ and $\bb{v}$ is a standard Wiener process independent of $\bb{w}$. The control objective is to render a neighborhood $\mathcal{G}$ of the origin $p_g$-FxTS for various values of $p_g \in [0.5, 1.0)$ for $T_g = 1$, i.e., to drive $\bb{x}(t) \rightarrow \mathcal{G}$ as $t \rightarrow T_g$, independent of $\bb{x}(0)$, with a probability of at least $p_g$, where $\mathcal{G} = \{\bb{x} \in \X \mid V(\bx) \leq 0 \}$ for $V(\bx) = \frac{1}{2}(x_1^2 + x_2^2 - r^2)$ with $r > 0$. 

The system was simulated under RA-FxT-CLF- and RA-PI-CLF-based controllers of the form \eqref{eq.ra_clf_qp} with nominal input $\bb{u}_0 = (0, 0)$, as well as a stochastic FxT-CLF (S-FxT-CLF) controller inspired by \cite{Yu2019Fixed} for comparison. 
Note that in this example, \eqref{eq.ra_clf_qp} is a QP.
The RA-FxT-CLF and RA-PI-CLF controllers underwent 6 trials each consisting of 100 simulations where the goal stabilization probability, i.e., probability of reaching the goal set within time $T_g$, was set to $p_g=0.55, 0.75, 0.95$ and, for each case, was tested once for perfect measurements (state-feedback) and once using an EKF for state estimation (estimate-feedback). 
The S-FxT-CLF controller ($p_g=0.50$) was tested over two trials of 100 simulations: perfect measurements and EKF state estimation.
Control parameters were constant over each set of 100 simulations, but varied from trial to trial. 
At the start of each simulation, the initial conditions were randomized according to $\bb{x}(0) = (r_0 \cos\theta_0, r_0\sin\theta_0)$, where $r_0 = \mathbb{U}[1, \sqrt{2}]$ and $\theta_0 = \frac{\pi}{4} + \frac{\pi}{2}q + \mathbb{U}[-\frac{\pi}{8},\frac{\pi}{8}]$, with $q$ sampled from the discrete uniform distribution over the set $\{1, 2, 3, 4\}$.

The empirical results for the S-FxT-CLF, RA-FxT-CLF, and RA-PI-CLF controllers may be found in Tables \ref{tab:sclf_results}, \ref{tab:rafxtclf_results}, and \ref{tab:rapiclf_results} respectively. 
States and state estimates, control inputs, and values for $V(\bx)$ are also displayed in Figures \ref{fig.state_paths}, \ref{fig.control_inputs}, \ref{fig.lyapunov_vals} for the RA-FxT-CLF $p_g=0.55$ trial. 
The proposed controllers satisfied their theoretical probabilistic FxTS bounds in all trials. 
What is also apparent, however, is that decreasing the tolerable risk of failure (i.e., increasing the probability of reaching the goal) generally leads to lower average convergence times, which intuitively suggests that controllers with a stronger aversion to risk seek their goals more aggressively. 
In addition, average time-to-goal increases in the presence of imperfect measurements, which, despite being compensated for with state estimation, increase levels of uncertainty in the system.
Further, the success rate was near 100$\%$ in all cases, which may point to the inherent conservatism required for the proposed methods, which rely on worst-case assumptions both with respect to the effect of the stochastic perturbation on the dynamics and the measurement uncertainty. 
Reducing this conservatism will be the focus of future work. It is evident, however, that by accepting less risk of failure our methods still converge faster to the goal on average than the S-FxT-CLF-based controller.

\begin{table}[t]
    \centering
    \begin{tabular}{c|c|c|c|c}
         Theoretical $p_g$&  Observed $p_g$&  Perfect&  EKF & $T_{avg}$\\
         \hline
         0.50 & 1.00 & x &  & 0.17\\
         \hline
         0.50 & 1.00 &  & x & 0.20\\
    \end{tabular}
    \caption{S-FxT-CLF results from the 2D nonlinear system goal-reaching study over 100 trials.}
    \label{tab:sclf_results}
\end{table}
\begin{table}[t]
    \centering
    \begin{tabular}{c|c|c|c|c|c}
         Theoretical $p_g$&  Observed $p_g$&  Perfect&  EKF & $T_{avg}$\\
         \hline
         0.55 & 1.00 & x &  & 0.12\\
         \hline
         0.75 & 1.00 & x &  & 0.05\\
         \hline
         0.95 & 1.00 & x &  & 0.04\\
         \hline
         0.55 & 1.00 &  & x & 0.27\\
         \hline
         0.75 & 1.00 &  & x & 0.11 \\
         \hline
         0.95 & 1.00 &  & x & 0.14 \\
    \end{tabular}
    \caption{RA-FxT-CLF results from the 2D nonlinear system goal-reaching study over 100 trials.}
    \label{tab:rafxtclf_results}
\end{table}
\begin{table}[t]
    \centering
    \begin{tabular}{c|c|c|c|c|c}
         Theoretical $p_g$&  Observed $p_g$&  Perfect&  EKF & $T_{avg}$\\
         \hline
         0.55 & 1.00 & x &  & 0.06 \\
         \hline
         0.75 & 1.00 & x &  & 0.05 \\
         \hline
         0.95 & 1.00 & x &  & 0.03 \\
         \hline
         0.55 &  0.97 &  & x & 0.14 \\
         \hline
         0.75 &  0.97 &  & x & 0.11 \\
         \hline
         0.95 & 0.97 &  & x & 0.10 \\
    \end{tabular}
    \caption{RA-PI-CLF results from the 2D nonlinear system goal-reaching study over 100 trials.}
    \label{tab:rapiclf_results}
\end{table}

\begin{figure}[!ht]
    \centering
        \includegraphics[width=0.95\columnwidth,clip]{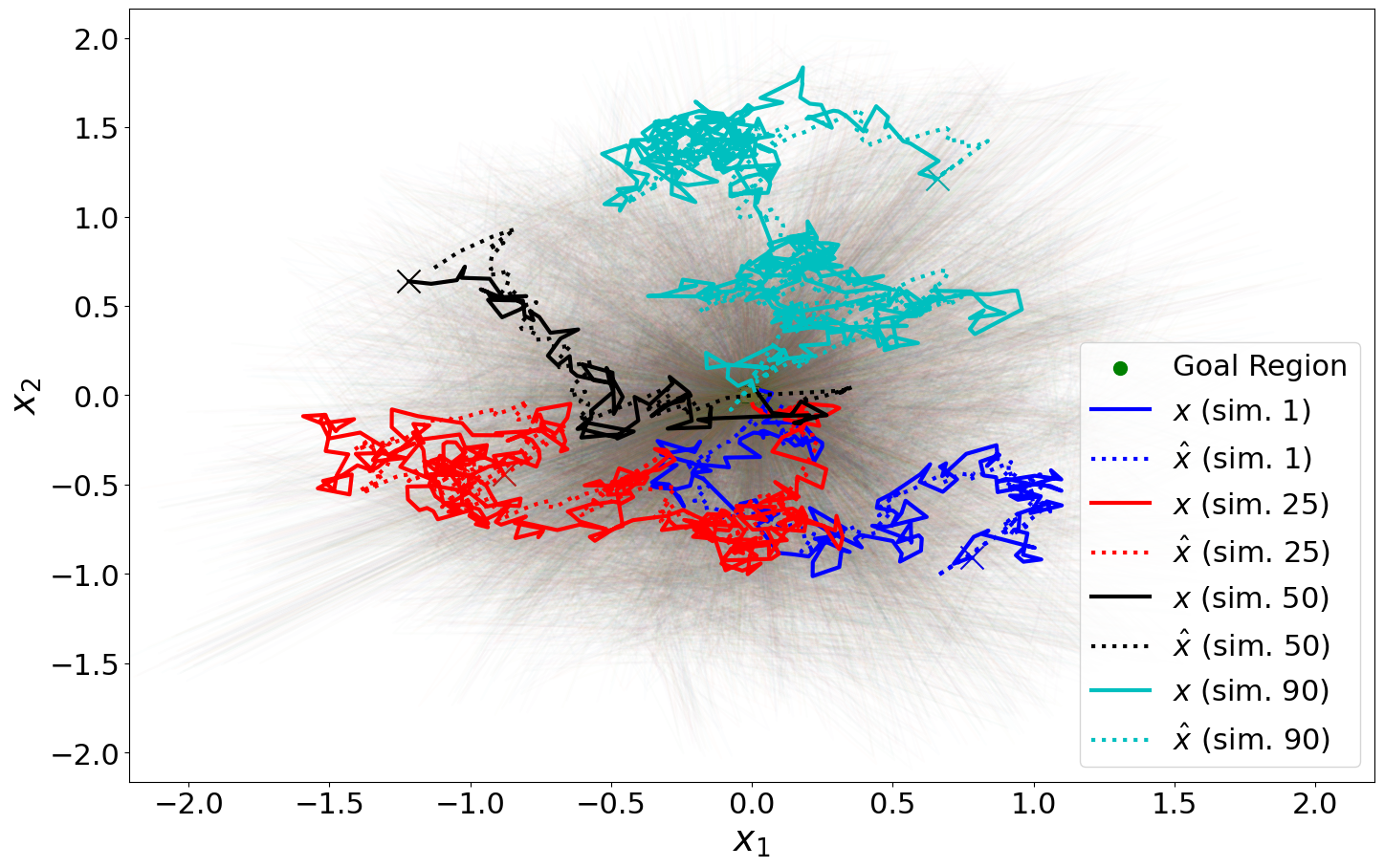}
    \caption{State paths ($x$) starting from initial condition $x_0$ (marked by X) for the RA-FxT-CLF controlled system with EKF state estimation ($\hat x$) and $p_g=0.55$. All paths displayed (translucent), with a selection highlighted (sims. 1, 25, 50, 90).}\label{fig.state_paths}
\end{figure}
\begin{figure}[!ht]
    \centering
        \includegraphics[width=0.95\columnwidth,clip]{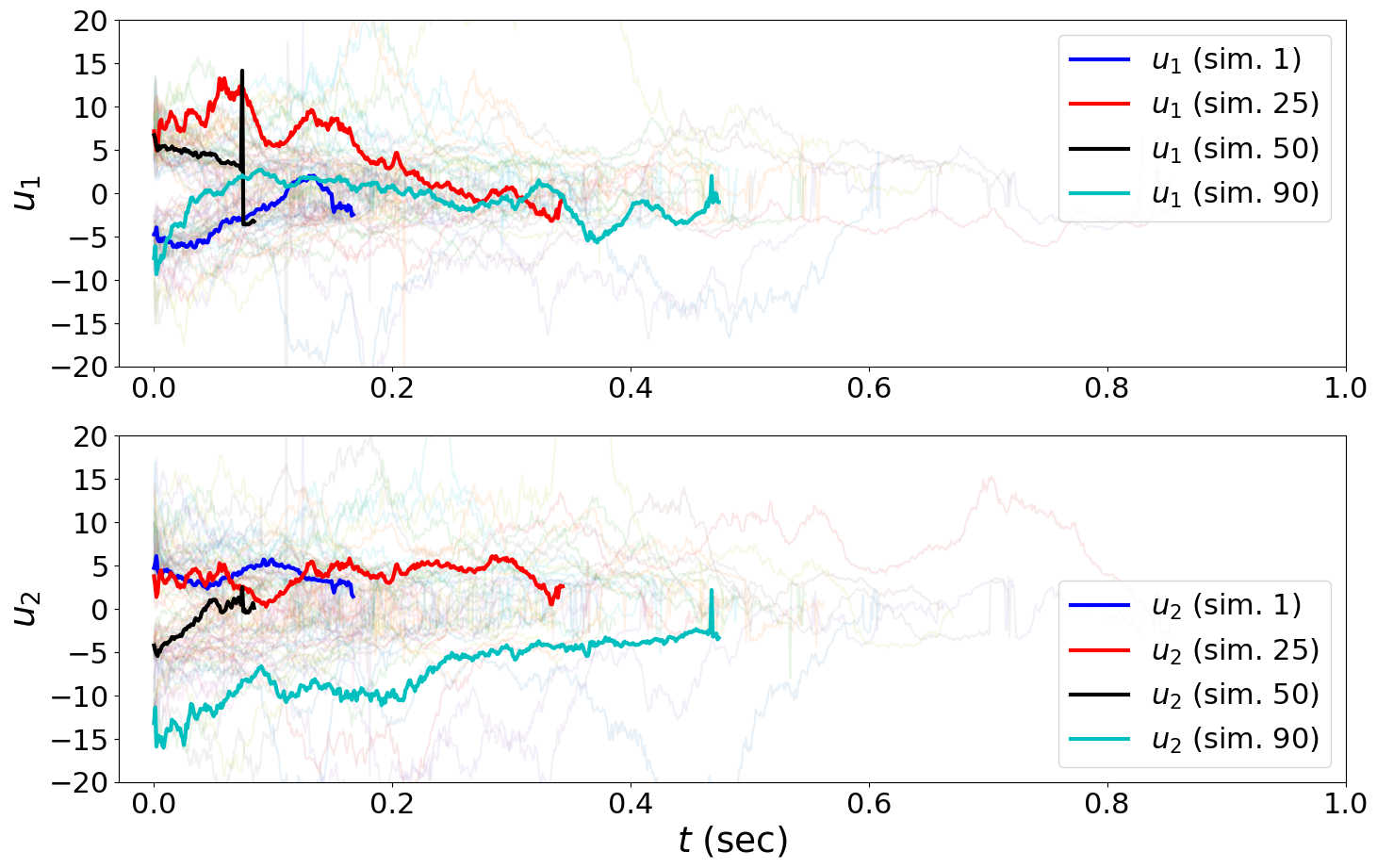}
    \caption{Control trajectories for the RA-FxT-CLF controlled system with EKF state estimation and $p_g=0.55$. All controls displayed (translucent), with a selection highlighted (sims. 1, 25, 50, 90).}\label{fig.control_inputs}
\end{figure}
\begin{figure}[!ht]
    \centering
        \includegraphics[width=0.95\columnwidth,clip]{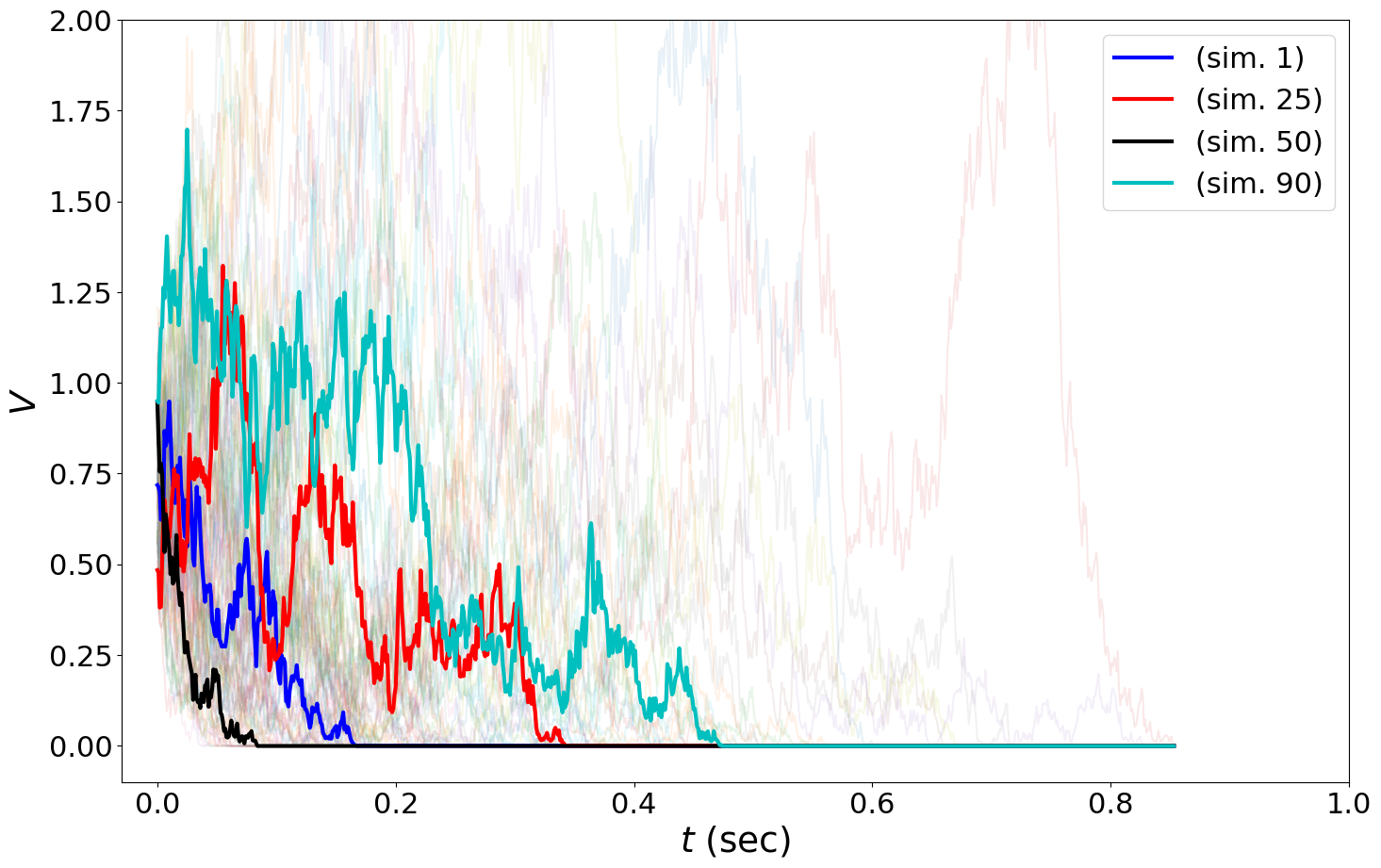}
    \caption{Lyapunov function values of the true state ($V(\bx)$) versus time for the RA-FxT-CLF controlled system with EKF state estimation and $p_g=0.55$. All runs displayed (translucent), with a selection highlighted (sims. 1, 25, 50, 90).}\label{fig.lyapunov_vals}
\end{figure}

\subsection{Fixed-Wing UAV Example}

In this case study on the use of RA-FxT-CLFs for fixed-wing UAV control for reaching a goal set within a fixed-time while avoiding an unsafe set, the following kinematic model inspired by \cite{beard2014fixed} is used to describe the aircraft:

\small{
\begin{equation*}
    \begin{bmatrix}
    \d p_n \\ \d p_e \\ \d h \\ \d v \\ \d \psi \\ \d \gamma
    \end{bmatrix} = \left(\begin{bmatrix}
        v \cos\psi \cos\gamma \\
        v \sin\psi \cos\gamma \\
        v \sin\gamma \\
        0 \\
        0 \\
        0
    \end{bmatrix} + \begin{bmatrix}
        0 & 0 & 0 \\
        0 & 0 & 0 \\
        0 & 0 & 0 \\
        u_1 & 0 & 0 \\
        0 & u_2 & 0 \\
        0 & 0 & u_3 \\
    \end{bmatrix}\right)\d t + \sigma_x(\bx)\d \bb{w},
\end{equation*}}\normalsize
where $\bx = (x, y, z, v, \psi, \gamma) \in \R^6$ denotes the state vector, $\bb{u} = (u_1,u_2,u_3) \in \R^3$ the control input vector, with $p_n$ and $p_e$ the inertial north and east positions, $h$ the altitude, $v$ the airspeed, $\psi$ the heading angle measured from north, $\gamma$ the flight path angle, and where $u_1=a$ is the rate of change of the airspeed, $u_2=\frac{g}{v}\tan\phi$ with $\phi$ the roll angle and $g$ the acceleration due to gravity, and $u_3=\omega$ the rate of change of flight path angle. The stochastic term $\sigma_x(\bx) = \text{diag}(0, 0, 0, w_n, w_e, w_h) \in \R^{6\times 6}$ is meant to represent the uncertain effect of wind perturbing the system.

The control law used for this study was the RA-FxT-CLF-based controller of the form \eqref{eq.ra_clf_qp}, and the objective was to render $(0.75)$-FxTS a goal set defined according to \eqref{eq.goal_set}, where $V(\bx) = V_0(\bx) / \left(1 - \sum_{i=1}^3\left(\frac{1}{b_i(\bx)}\right)\right)$, where $V_0(\bx)=(v\cos\psi\cos\gamma +100)^2 + (v\sin\psi\cos\gamma - \dot y_d)^2 + (v\sin\gamma - \dot z_d)^2$ with $\dot y_d = y_g-y$, $\dot z_d = z_g-z$, and $b_i(\bx) = \dot h_i(\bx) + \alpha(h_i(\bx))$ for $h_i(\bx) = (\frac{x - c_{x,i}}{a_x})^2 + (\frac{y - c_{y,i}}{a_y})^2 + (\frac{z - c_{z,i}}{a_z})^2 -1$ and $\alpha \in \mathcal{K}_\infty$ such that $\{\bx \in \X \mid h_i(\bx) < 0\}$ denotes the undesirable set of states inside the i$^{th}$ ellipsoid obstacle. As such, $V$ is defined similar to classes of barrier-Lyapunov functions (e.g., \cite{romdlony2016stabilization}), which seek to guide the system to the goal set while avoiding the undesirable set.

It may be seen from the XY and YZ planes depicted in Figures \ref{fig.fixed_wing_xy_paths} and \ref{fig.fixed_wing_yz_paths} that the system manages to do exactly this, while Figure \ref{fig.fixed_wing_controls} shows the control inputs used to achieve this result. In particular, Figure \ref{fig.fixed_wing_yz_paths} shows the effect of the controller steering the vehicle further away from the ellipsoid. While perhaps requiring more care and attention to implement, this study highlights how the classes of RA-FxT-CLFs and RA-PI-CLFs may be used for risk-aware control in the presence of state constraints, similar to the classes of barrier-Lyapunov functions mentioned previously.

\begin{figure}[!ht]
    \centering
        \includegraphics[width=0.95\columnwidth,clip]{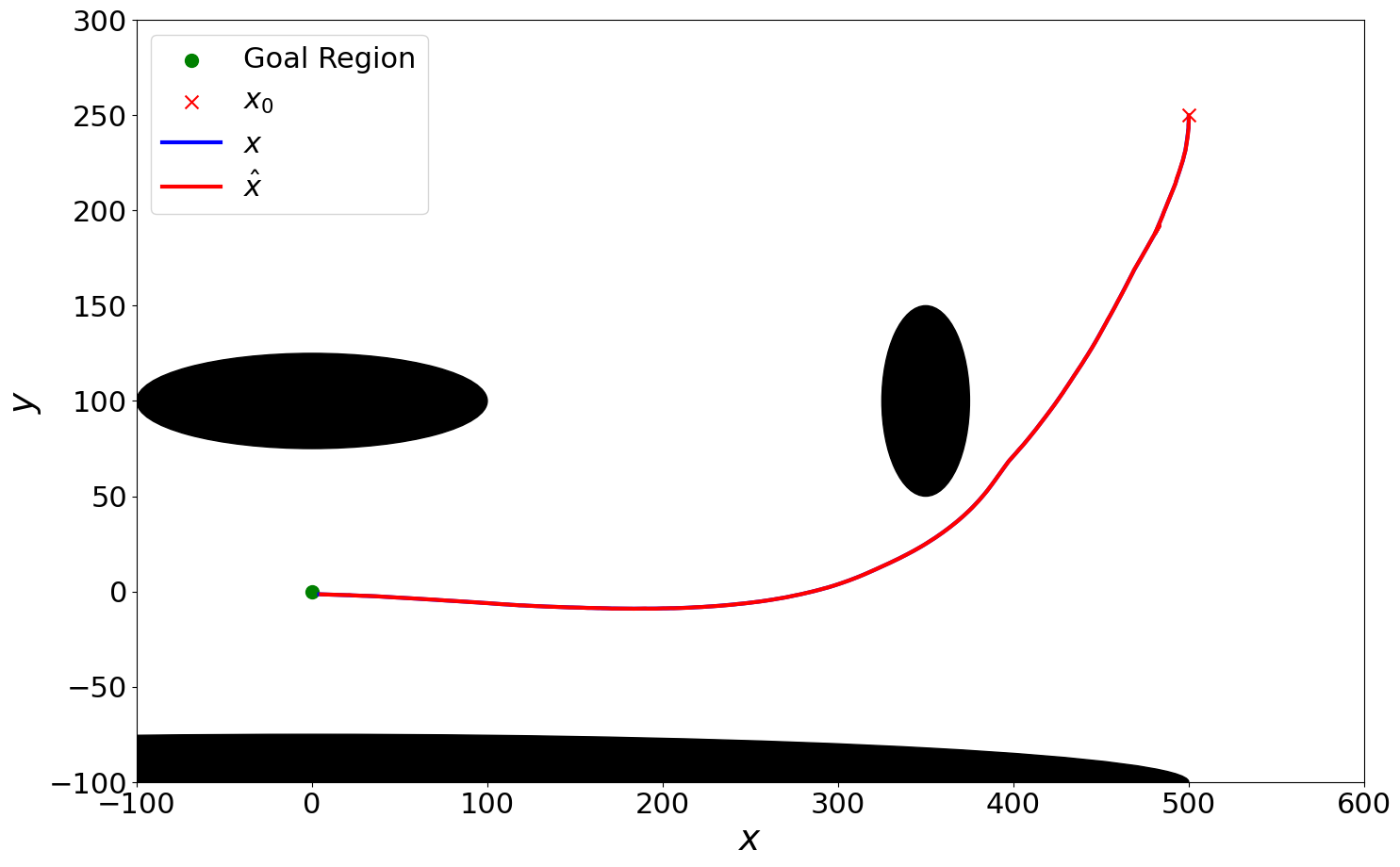}
    \caption{Fixed-wing UAV paths in the XY plane starting from initial condition (marked by X).}\label{fig.fixed_wing_xy_paths}
\end{figure}
\begin{figure}[!ht]
    \centering
        \includegraphics[width=0.95\columnwidth,clip]{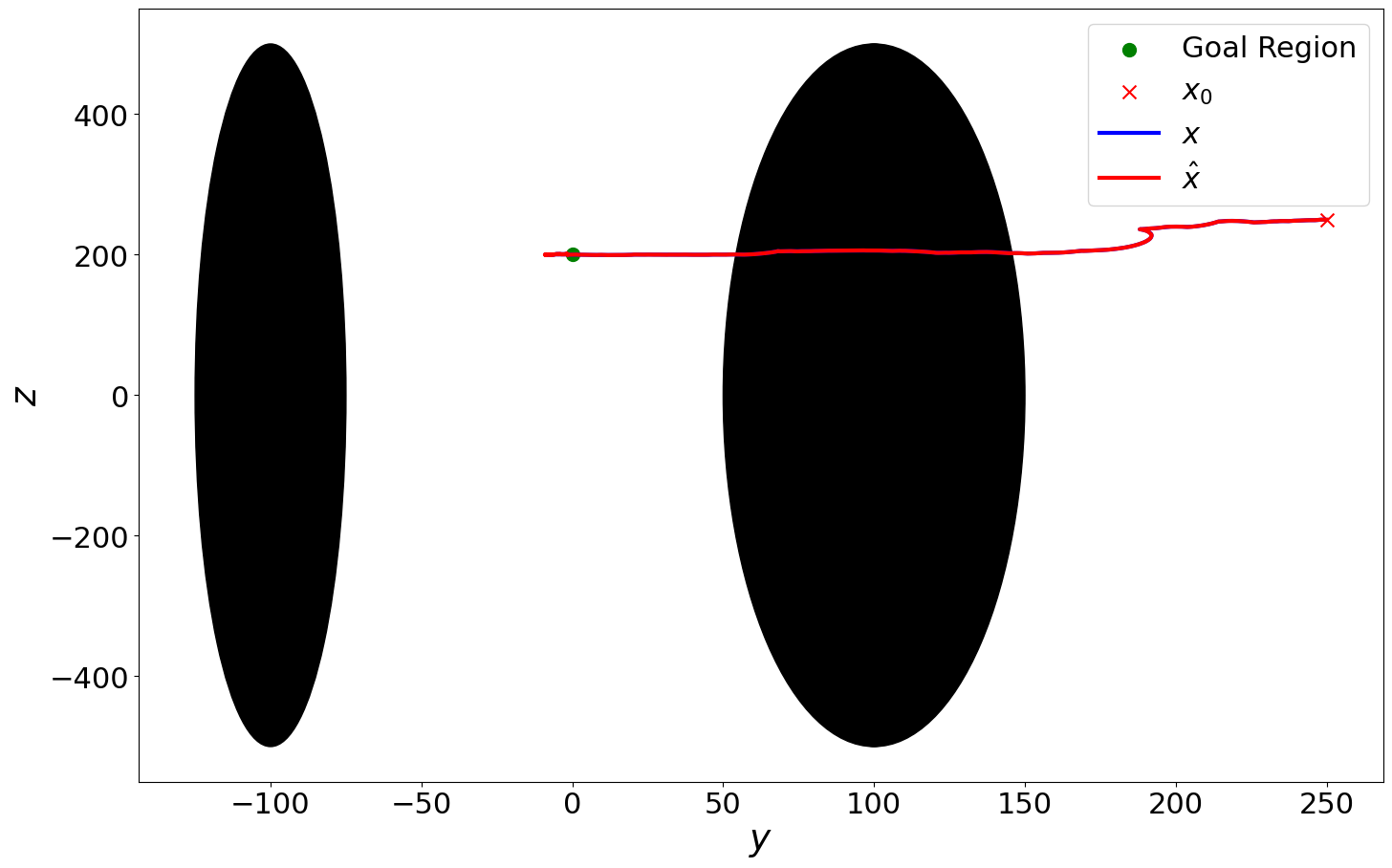}
    \caption{Fixed-wing UAV paths in the YZ plane starting from initial condition (marked by X). Note from Figure \ref{fig.fixed_wing_xy_paths} that the vehicle actually avoids the shown obstacles.}\label{fig.fixed_wing_yz_paths}
\end{figure}
\begin{figure}[!ht]
    \centering
        \includegraphics[width=0.95\columnwidth,clip]{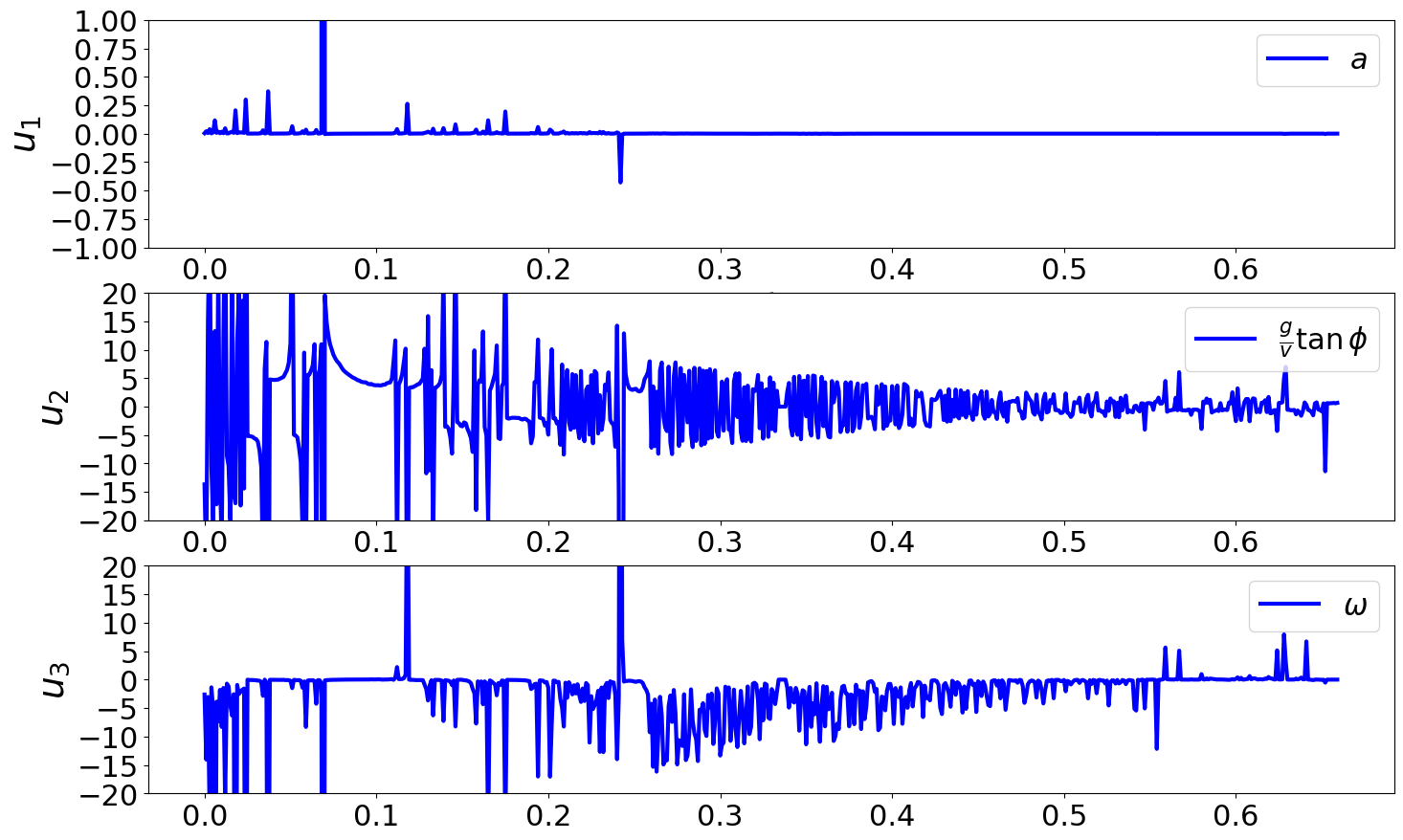}
    \caption{Fixed-wing UAV control trajectories.}\label{fig.fixed_wing_controls}
\end{figure}



\section{Conclusion}\label{sec.conclusion}
In this paper, two Lyapunov-based methods for risk-aware fixed-time stabilization were proposed for a class of stochastic, nonlinear systems subject to measurement uncertainty. It was shown that the use of either approach for control design renders a goal set probabilistically FxTS with probability $p_g$. An empirical study on an illustrative nonlinear system validated the proposed approach in simulation, and a fixed-wing UAV example further demonstrated the efficacy of RA-FxT- and RA-PI-CLFs.

In the future, we plan to explore ways to reduce conservatism associated with the proposed approaches, to combine them with risk-aware barrier functions \cite{black2023safety}, and to investigate their use for a priori system verification.


\bibliographystyle{IEEEtran}
\bibliography{library}

\begin{thebibliography}{10}
\providecommand{\url}[1]{#1}
\csname url@samestyle\endcsname
\providecommand{\newblock}{\relax}
\providecommand{\bibinfo}[2]{#2}
\providecommand{\BIBentrySTDinterwordspacing}{\spaceskip=0pt\relax}
\providecommand{\BIBentryALTinterwordstretchfactor}{4}
\providecommand{\BIBentryALTinterwordspacing}{\spaceskip=\fontdimen2\font plus
\BIBentryALTinterwordstretchfactor\fontdimen3\font minus
  \fontdimen4\font\relax}
\providecommand{\BIBforeignlanguage}[2]{{%
\expandafter\ifx\csname l@#1\endcsname\relax
\typeout{** WARNING: IEEEtran.bst: No hyphenation pattern has been}%
\typeout{** loaded for the language `#1'. Using the pattern for}%
\typeout{** the default language instead.}%
\else
\language=\csname l@#1\endcsname
\fi
#2}}
\providecommand{\BIBdecl}{\relax}
\BIBdecl

\bibitem{sontag1983lyapunov}
E.~D. Sontag, ``A lyapunov-like characterization of asymptotic
  controllability,'' \emph{SIAM journal on control and optimization}, vol.~21,
  no.~3, pp. 462--471, 1983.

\bibitem{florchinger1997feedback}
P.~Florchinger, ``Feedback stabilization of affine in the control stochastic
  differential systems by the control lyapunov function method,'' \emph{SIAM
  Journal on Control and optimization}, vol.~35, no.~2, pp. 500--511, 1997.

\bibitem{dimarogonas2004lyapunov}
D.~V. Dimarogonas and K.~J. Kyriakopoulos, ``Lyapunov-like stability of
  switched stochastic systems,'' in \emph{Proceedings of the 2004 American
  Control Conference}, vol.~2.\hskip 1em plus 0.5em minus 0.4em\relax IEEE,
  2004, pp. 1868--1872.

\bibitem{bhat2000finite}
S.~P. Bhat and D.~S. Bernstein, ``Finite-time stability of continuous
  autonomous systems,'' \emph{SICON}, vol.~38, no.~3, pp. 751--766, 2000.

\bibitem{polyakov2011nonlinear}
A.~Polyakov, ``Nonlinear feedback design for fixed-time stabilization of linear
  control systems,'' \emph{IEEE Transactions on Automatic Control}, vol.~57,
  no.~8, pp. 2106--2110, 2011.

\bibitem{chen2010finite}
W.~Chen and L.~Jiao, ``Finite-time stability theorem of stochastic nonlinear
  systems,'' \emph{Automatica}, vol.~46, no.~12, pp. 2105--2108, 2010.

\bibitem{yin2011finite}
J.~Yin, S.~Khoo, Z.~Man, and X.~Yu, ``Finite-time stability and instability of
  stochastic nonlinear systems,'' \emph{Automatica}, vol.~47, no.~12, pp.
  2671--2677, 2011.

\bibitem{Zha2014Finite}
W.~Zha, J.~Zhai, S.~Fei, and Y.~Wang, ``Finite-time stabilization for a class
  of stochastic nonlinear systems via output feedback,'' \emph{ISA
  Transactions}, vol.~53, no.~3, pp. 709--716, 2014.

\bibitem{Yu2019Fixed}
J.~Yu, S.~Yu, J.~Li, and Y.~Yan, ``Fixed-time stability theorem of stochastic
  nonlinear systems,'' \emph{International Journal of Control}, vol.~92, no.~9,
  pp. 2194--2200, 2019.

\bibitem{li2023prescribed}
W.~Li and M.~Krstic, ``Prescribed-time output-feedback control of stochastic
  nonlinear systems,'' \emph{IEEE Transactions on Automatic Control}, vol.~68,
  no.~3, pp. 1431--1446, 2023.

\bibitem{Hewing2018Stochastic}
L.~Hewing and M.~N. Zeilinger, ``Stochastic model predictive control for linear
  systems using probabilistic reachable sets,'' in \emph{2018 IEEE Conference
  on Decision and Control (CDC)}, 2018, pp. 5182--5188.

\bibitem{abate2008probabilistic}
A.~Abate, M.~Prandini, J.~Lygeros, and S.~Sastry, ``Probabilistic reachability
  and safety for controlled discrete time stochastic hybrid systems,''
  \emph{Automatica}, vol.~44, no.~11, pp. 2724--2734, 2008.

\bibitem{Yaghoubi2021RiskKalman}
S.~Yaghoubi, G.~Fainekos, T.~Yamaguchi, D.~Prokhorov, and B.~Hoxha,
  ``Risk-bounded control with kalman filtering and stochastic barrier
  functions,'' in \emph{2021 60th IEEE Conference on Decision and Control
  (CDC)}, 2021, pp. 5213--5219.

\bibitem{singletary2022safe}
A.~Singletary, M.~Ahmadi, and A.~D. Ames, ``Safe control for nonlinear systems
  with stochastic uncertainty via risk control barrier functions,'' \emph{IEEE
  Control Systems Letters}, vol.~7, pp. 349--354, 2022.

\bibitem{black2023safety}
M.~Black, G.~Fainekos, B.~Hoxha, D.~Prokhorov, and D.~Panagou, ``Safety under
  uncertainty: Tight bounds with risk-aware control barrier functions,'' in
  \emph{2023 IEEE International Conference on Robotics and Automation (ICRA)},
  2023, pp. 12\,686--12\,692.

\bibitem{hakobyan2019risk}
A.~Hakobyan, G.~C. Kim, and I.~Yang, ``Risk-aware motion planning and control
  using cvar-constrained optimization,'' \emph{IEEE Robotics and Automation
  letters}, vol.~4, no.~4, pp. 3924--3931, 2019.

\bibitem{ahmadi2021risk}
M.~Ahmadi, X.~Xiong, and A.~D. Ames, ``Risk-averse control via cvar barrier
  functions: Application to bipedal robot locomotion,'' \emph{IEEE Control
  Systems Letters}, vol.~6, pp. 878--883, 2021.

\bibitem{yin2023risk}
J.~Yin, Z.~Zhang, and P.~Tsiotras, ``Risk-aware model predictive path integral
  control using conditional value-at-risk,'' in \emph{2023 IEEE International
  Conference on Robotics and Automation (ICRA)}, 2023, pp. 7937--7943.

\bibitem{wei2023moving}
S.~X. Wei, A.~Dixit, S.~Tomar, and J.~W. Burdick, ``Moving obstacle avoidance:
  A data-driven risk-aware approach,'' \emph{IEEE Control Systems Letters},
  vol.~7, pp. 289--294, 2023.

\bibitem{Oksendal2003Stochastic}
B.~Øksendal, \emph{Stochastic Differential Equations: An Introduction with
  Applications}, 6th~ed.\hskip 1em plus 0.5em minus 0.4em\relax Springer, 2003.

\bibitem{reif1999stochastic}
K.~Reif, S.~Gunther, E.~Yaz, and R.~Unbehauen, ``Stochastic stability of the
  discrete-time extended kalman filter,'' \emph{IEEE Transactions on Automatic
  control}, vol.~44, no.~4, pp. 714--728, 1999.

\bibitem{Xu2008UKF}
J.~Xu, S.~Wang, G.~M. Dimirovski, and Y.~Jing, ``Stochastic stability of the
  continuous-time unscented kalman filter,'' in \emph{2008 47th IEEE Conference
  on Decision and Control}, 2008, pp. 5110--5115.

\bibitem{Jahanshahi2020Partial}
N.~Jahanshahi, P.~Jagtap, and M.~Zamani, ``Synthesis of stochastic systems with
  partial information via control barrier functions,''
  \emph{IFAC-PapersOnLine}, vol.~53, no.~2, pp. 2441--2446, 2020, 21st IFAC
  World Congress.

\bibitem{julius2008probabilistic}
A.~A. Julius and G.~J. Pappas, ``Probabilistic testing for stochastic hybrid
  systems,'' in \emph{2008 47th IEEE Conference on Decision and Control}.\hskip
  1em plus 0.5em minus 0.4em\relax IEEE, 2008, pp. 4030--4035.

\bibitem{beard2014fixed}
R.~W. Beard, J.~Ferrin, and J.~Humpherys, ``Fixed wing uav path following in
  wind with input constraints,'' \emph{IEEE Transactions on Control Systems
  Technology}, vol.~22, no.~6, pp. 2103--2117, 2014.

\bibitem{romdlony2016stabilization}
M.~Z. Romdlony and B.~Jayawardhana, ``Stabilization with guaranteed safety
  using control lyapunov--barrier function,'' \emph{Automatica}, vol.~66, pp.
  39--47, 2016.

\bibitem{Blake1973level-crossing}
I.~Blake and W.~Lindsey, ``Level-crossing problems for random processes,''
  \emph{IEEE Trans. on Information Theory}, vol.~19, no.~3, pp. 295--315, 1973.

\bibitem{zhang2020wasserstein}
K.~S. Zhang, G.~Peyr\'e, J.~Fadili, and M.~Pereyra, ``Wasserstein control of
  mirror langevin monte carlo,'' in \emph{Proceedings of 33rd Conference on
  Learning Theory}, vol. 125, 09--12 Jul 2020, pp. 3814--3841.

\end{thebibliography}


\appendices
\section{Proof of Lemma \ref{lem.wiener_crossing_probability}}\label{app.wiener_crossing_probability}
This proof follows from \cite[Sec. 3]{Blake1973level-crossing}. The cumulative distribution function (CDF) of the supremum of a scalar process $\{x_t: t \in [0, \infty)\}$ over an interval $[0, T]$ is
\begin{equation}
    F_x(a, T) = P\left(\sup_{0 \leq t \leq T}x(t) < a\right) = \int_T^\infty q_a(\theta \; | \; x_0) d\theta, \nonumber
\end{equation}
where $q_a$ is the first passage-time probability density function with respect to level $a>0$. For a standard Wiener process\footnote{A discussion on how to obtain $q_a$ for $w_t$ may be found in \cite{Blake1973level-crossing}.} $\{w_t: t \in [0,\infty)\}$, the CDF of the supremum is given by
\begin{equation*}
    F_w(a, T) = \sqrt{\frac{2}{\pi}}\int_0^\frac{a}{\sqrt{T}}e^{-\frac{s^2}{2}}ds = \mathrm{erf}\left(\frac{a}{\sqrt{2T}}\right).
\end{equation*}

\section{Proof of Lemma \ref{lem.multidim_ito_isometry}}\label{app.proof_multidim_ito_isometry}
Inspired by \cite[Lem. 18]{zhang2020wasserstein}, by the vector dot product it follows that 
\begin{align*}
    \mathbb E\left[\left(\int_0^t\bb{v}_s^\top\text{d}\bb{w}_s\right)^2\right] &= \mathbb E\left[\left(\sum_{i=1}^q\int_0^tv_i\cdot \text{d}w^i_s\right)^2\right].
\end{align*}
Then, since $w_t^i$ and $w_t^j$ are independent for all $i \neq j$, it follows that
\small{
\begin{align*}
    \mathbb{E}\left[\left(\int_0^tv_i\text{d}w^i_s\right) \left(\int_0^tv_j\text{d}w^j_s\right)\right] &= \mathbb{E}\left[\int_0^tv_i\text{d}w^i_s\right]\mathbb{E}\left[\int_0^tv_j\text{d}w^j_s\right] 
    \\ &= 0
\end{align*}
}\normalsize
for all $i \neq j$. Therefore,
\begin{align*}
    \mathbb E\left[\left(\sum_{i=1}^q\int_0^tv_i\cdot \text{d}w^i_s\right)^2\right] &= \mathbb E\left[\sum_{i=1}^q\left(\int_0^tv_i\text{d}w^i_s\right)^2\right], \\
    &= \mathbb E\left[\sum_{i=1}^q\int_0^tv_i^2\dt\right],
\end{align*}
where the last equality follows from the $1$D It$\hat{\mathrm{o}}$ isometry \cite[Lem. 3.1.5]{Oksendal2003Stochastic}). The result then follows directly by definition of the vector 2-norm.

\end{document}